\newtheorem{theorem}{Theorem}
\newtheorem{lemma}{Lemma}
\theoremstyle{definition}
\newtheorem{definition}{Definition}
\newtheorem{example}{Example}
\theoremstyle{remark}
\newtheorem{remark}[theorem]{Remark}
\numberwithin{equation}{section}
\begin{document}
\title{ Finite generation of Lie algebras associated to associative algebras}

\address{
a. Department of Mathematics,
King Abdulaziz University,
Jeddah, SA\\
\newline\email{analahmadi@kau.edu.sa; hhaalsalmi@kau.edu.sa}
\newline
b. Department of Mathematics\\
Ohio University, Athens, USA \\
\newline\email{jain@ohio.edu}
\newline
c. Department of Mathematics\\
University of California, San Diego, USA\\
\newline\email{ezelmano@math.ucsd.edu}
\newline
1. To whom correspondence should be addressed\\ E-mail: ezelmano@math.ucsd.edu.
\newline
Author Contributions: A. A., H. A., S. K. J., E. Z. designed research;
performed research and wrote the paper. The authors declare no conflict of interest. }

\author{Adel Alahmedi $^a$, Hamed Alsulami$^a$, S. K. Jain $^{a,b}$
, Efim Zelmanov$^{a,c,1}$}

%\author{Adel Alahmadi}
%\address{Department of Mathematics, King Abdulaziz University, P.O.Box 80203, Jeddah, 21589, Saudi Arabia}
%\email{analahmadi@kau.edu.sa}
%\author{Hamed Alsulami}
%\address{Department of Mathematics, King Abdulaziz University, P.O.Box 80203, Jeddah, 21589, Saudi Arabia}
%\email{hhaalsalmi@kau.edu.sa}
%
%\author{S. K. Jain}
%\address{Department of Mathematics, Ohio University, Athens, USA }
%\email{jain@ohio.edu}
%
%\author{Efim Zelmanov}
%\address{Department of Mathematics, University of California, San Diego, USA }
%\email{efim.zelmanov@gmail.com}

\keywords{associative algebra, Lie subalgebra, finitely generated}

\maketitle

\begin{abstract}
 Let $F$ be a field of characteristic not $2$ . An associative $F$-algebra $R$ gives rise to the commutator Lie algebra $R^{(-)}=(R,[a,b]=ab-ba).$ If the algebra $R$ is equipped with an involution $*:R\rightarrow R$ then the space of the skew-symmetric elements $K=\{a \in R \mid a^{*}=-a \}$ is a Lie subalgebra of $R^{(-)}.$ In this paper we find sufficient conditions for the Lie algebras $[R,R]$ and $[K,K]$ to be finitely generated.
\end{abstract}

\maketitle

\section{Introduction}
Let $F$ be a field of characteristic not $2.$ An associative $F$-algebra $R$ gives rise to the commutator Lie algebra $R^{(-)}=(R,[a,b]=ab-ba)$ and the Jordan algebra $R^{(+)}=(R,a\circ b=\frac{1}{2}(ab+ba)).$ If the algebra $R$ is equipped with an involution $*:R\rightarrow R$ then the space of skew-symmetric elements $K=\{ a \in R \mid a^{*}=-a\}$ is a Lie subalgebra of $R^{(-)},$ the space of symmetric elements $H=\{ a \in R \mid a^{*}=a\}$ is a Jordan subalgebra of $R^{(+)}.$ Following the result of J.M. Osborn (see[4]) on finite generation of the Jordan algebras $R^{(+)},H,$  I. Herstein [4] raised the question about finite generation of Lie algebras associated to $R.$ In this paper we find sufficient conditions for the Lie algebras $[R^{(-)},R^{(-)}],[K,K]$ to be finitely generated.
\begin{theorem}\label{th1}
Let $R$ be a finitely generated associative F-algebra with an idempotent $e$
such that $ReR=R(1-e)R=R$. Then the Lie algebra $[R,R]$ is finitely generated.
\end{theorem}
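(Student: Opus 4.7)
My plan is to work with the Peirce decomposition $R = R_{11} \oplus R_{12} \oplus R_{21} \oplus R_{22}$, where $R_{ij} = e_i R e_j$, $e_1 = e$, $e_2 = 1-e$. Writing $1 = \sum_k u_k e v_k$ and $1 = \sum_k u'_k(1-e)v'_k$ and projecting onto the $R_{ij}$ yields the structural relations $R_{11} = R_{12}R_{21}$, $R_{22} = R_{21}R_{12}$, $R_{12} = R_{11}R_{12} = R_{12}R_{22}$, and $R_{21} = R_{22}R_{21} = R_{21}R_{11}$. Since $[e,x]=x$ for $x\in R_{12}$ and $[e,y]=-y$ for $y\in R_{21}$, we have $R_{12}\cup R_{21}\subseteq [R,R]$, and a routine computation using $R_{ij}R_{kl}=0$ for $j\ne k$ gives
\[
 [R,R] = [R_{11},R_{11}] + [R_{22},R_{22}] + R_{12} + R_{21} + \{xy - yx : x \in R_{12},\, y \in R_{21}\}.
\]

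The first technical input is a Morita-style finiteness argument. If $x_1,\ldots,x_n$ generate $R$ as an algebra, then inserting $1=\sum_k u_k e v_k$ between consecutive letters of a monomial $e\,x_{i_1}\cdots x_{i_m}\,e \in R_{11}$ expresses it as a product of elements of the form $eu_ke$, $ev_ke$, $ex_iu_ke$, $ev_kx_ie$, $ev_kx_iu_le$. This exhibits a finite algebra generating set for $R_{11}$; the analogous trick handles $R_{22}$, and a variant with the final insertion omitted shows that $R_{12}$ is finitely generated as a left $R_{11}$-module (and as a right $R_{22}$-module), and likewise for $R_{21}$.

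Next, I assemble a finite candidate Lie-generating set. Choose finitely many $b_1,\ldots,b_q \in R_{12}$ and $c_1,\ldots,c_s \in R_{21}$ so that the $b_i$ generate $R_{12}$ as a left $R_{11}$-module, the $c_j$ generate $R_{21}$ as a right $R_{11}$-module, the finite set of products $\{b_ic_j\}$ contains (in its $F$-span) an algebra generating set of $R_{11}$ together with an expression $e = \sum b_ic_i$, and symmetrically $\{c_jb_i\}$ contains generators of $R_{22}$ and an expression $1-e = \sum c_jb_j$. Set $T = \{b_i\}\cup\{c_j\}$; this is a finite subset of $[R,R]$.

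The main obstacle is to prove that the Lie subalgebra $L\subseteq R^{(-)}$ generated by $T$ equals $[R,R]$. The key identities are
\[
 [[b_i,c_j],b_k] = b_ic_jb_k + b_kc_jb_i, \qquad [[b_i,c_j],c_l] = -(c_lb_ic_j + c_jb_ic_l),
\]
and
\[
 [[b_i,c_j],[b_k,c_l]] = [b_ic_j,\,b_kc_l] + [c_jb_i,\,c_lb_k] \in [R_{11},R_{11}] + [R_{22},R_{22}],
\]
the last holding because $R_{11}R_{22}=R_{22}R_{11}=0$ kills the cross terms. Combining these with the fact that $e - f \in L$ and $g - (1-e) \in L$ (where $f=\sum c_jb_j\in R_{22}$ and $g=\sum b_ic_i\in R_{11}$ come from the chosen resolutions of the Peirce idempotents), and iterating, one inductively builds up arbitrary elements of $R_{12}$, $R_{21}$, $[R_{11},R_{11}]$, $[R_{22},R_{22}]$, and $\{xy-yx\}$ inside $L$. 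The hard part is breaking the symmetry of the first two identities: to isolate an individual product like $ab_k$ from the symmetrized combination $ab_k+b_ka'$, one must vary indices and apply Jacobi, which is where having $\{b_ic_j\}$ large enough to generate $R_{11}$ as an algebra (rather than merely span a few distinguished elements) is indispensable.
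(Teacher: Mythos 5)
You have the right first half: the Peirce reduction and the finiteness facts (the corner $eRe$ is a finitely generated algebra, $eR(1-e)$ and $(1-e)Re$ are finitely generated one-sided modules, equivalently the associative pair $(eR(1-e),(1-e)Re)$ is finitely generated) are all true and correspond to the paper's Lemma 1 and Lemma 2. The genuine gap is exactly the step you label ``the main obstacle,'' and the mechanism you propose for closing it cannot work. Put $A^{+}=eR(1-e)$, $A^{-}=(1-e)Re$, and let $S\subseteq A^{+}\cup A^{-}$ be any subset (your $T$ is such a set). Since $A^{+}A^{+}=A^{-}A^{-}=0$ and $R_{11}R_{22}=R_{22}R_{11}=0$, an induction on bracket length shows that the Lie subalgebra $L$ generated by $S$ is contained in $J^{+}+J^{-}+[J^{+},J^{-}]$, where $(J^{-},J^{+})$ is the Jordan subpair generated by $S$ under the triple products $\{x,y,z\}=xyz+zyx=[[x,y],z]$; since the Peirce decomposition is direct, the $A^{+}$-component of $L$ is exactly $J^{+}$, and similarly for $A^{-}$. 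So only symmetrized (Jordan) expressions in your $b$'s and $c$'s are ever reachable: ``breaking the symmetry'' to isolate an individual word such as $b_{i}c_{j}b_{k}$ is impossible, no matter how you vary indices and apply Jacobi. Your hypotheses on $T$ only guarantee that $A^{+}$ is spanned by alternating associative words $b_{i_{1}}c_{j_{1}}\cdots b_{i_{s+1}}$, and an individual such word is in general not in the Jordan subpair generated by the $b_{i},c_{j}$ (already $a_{1}^{+}a_{1}^{-}a_{2}^{+}$ fails in the free associative pair), so the claim $L=[R,R]$ for your $T$ is unsubstantiated, and in general false without enlarging $T$.

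What is needed, and what constitutes the technical core of the paper's argument, is that the Jordan pair $(A^{-},A^{+})$ is finitely generated: this is Lemma 3 applied after Lemma 2. One proves identities such as $x^{+}y^{-}u^{+}v^{-}u^{+}=\{x^{+}y^{-}u^{+},v^{-},u^{+}\}-\tfrac{1}{2}\{u^{+},v^{-}x^{+}y^{-},u^{+}\}$ and their linearizations, which show that any alternating word with a repeated index is a Jordan expression in strictly shorter words; hence the Jordan pair is generated by the finitely many words of bounded length with pairwise distinct indices --- a finite set that is in general strictly larger than the original pair generators. Enlarging your $T$ to such a set and then using $\{a,b,c\}=[[a,b],c]$ together with your (correct) reduction of $[R,R]$ to the off-diagonal Peirce components completes the proof; without that step the argument is incomplete.
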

The following example shows that the idempotent condition can not be dropped.
\begin{example}\label{ex1}
The algebra $R=%
\begin{pmatrix}
F[x] & F[x] \\
0 & F[x]%
\end{pmatrix}%
$ of triangular $2\times 2$ matrices over the polynomial algebra $F[x]$ is
finitely generated. However the Lie algebra

$[R, R]=%
\begin{pmatrix}
0 & F[x] \\
0 & 0%
\end{pmatrix}%
$ is not.
\end{example}
\begin{theorem}\label{th2}
Let $R$ be a finitely generated associative F-algebra with an involution $\ast :R\rightarrow R$. Suppose that $R$ contains an idempotent $e$ such that
$ee^{\ast }=e^{\ast }e=0$ and $ReR=R(1-e-e^{\ast })R=R$. Then the Lie algebra $[K,K]$ is finitely generated.
\end{theorem}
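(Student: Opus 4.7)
The plan is to combine the Peirce decomposition of $R$ relative to the three mutually orthogonal idempotents $e$, $e^{*}$, and $e_{0}:=1-e-e^{*}$ with the action of the involution $*$, after first using Theorem \ref{th1} to obtain finite generation of $[R,R]$.

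Applying $*$ to $ReR=R$ gives $Re^{*}R=R$, so each of the three idempotents is full. Moreover $(1-e)e_{0}=e_{0}$, so the relation $Re_{0}R=R$ forces $R(1-e)R=R$, and Theorem \ref{th1} applied with idempotent $e$ yields that $[R,R]$ is finitely generated as a Lie algebra. The involution preserves $[R,R]$, and $[K,K]$ sits as a Lie subalgebra of its $(-1)$-eigenspace. Setting $e_{1}=e$, $e_{2}=e^{*}$, $e_{3}=e_{0}$, write $R=\bigoplus_{i,j=1}^{3}R_{ij}$ for the Peirce decomposition with $R_{ij}=e_{i}Re_{j}$. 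The involution swaps $R_{11}\leftrightarrow R_{22}$, $R_{13}\leftrightarrow R_{32}$, $R_{31}\leftrightarrow R_{23}$ and preserves each of $R_{12}$, $R_{21}$, $R_{33}$, so $K$ splits as a direct sum of six pieces: three ``hyperbolic'' components of the form $\{a-a^{*}:a\in R_{ij}\}$ for $(i,j)\in\{(1,1),(1,3),(3,1)\}$, and three intrinsic skew components $K\cap R_{12}$, $K\cap R_{21}$, $K\cap R_{33}$. Fullness of each idempotent also furnishes multiplicative identities $R_{ii}=R_{i3}R_{3i}$, $R_{12}=R_{13}R_{32}$, and $R_{21}=R_{23}R_{31}$, reducing every Peirce component to a product of the off-diagonal spaces involving index~$3$.

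The core step is to exhibit a finite subset $T\subset[K,K]$ that generates $[K,K]$ as a Lie algebra. Starting from a finite generating set of $R$ and the fullness identities, one extracts finite subsets $X_{13}\subset R_{13}$ and $X_{31}\subset R_{31}$ which generate these Peirce spaces as bimodules. A direct Peirce computation gives
\[
[a-a^{*},\,b-b^{*}] \;=\; \bigl(ab-(ab)^{*}\bigr) \;-\; \bigl(ba-(ba)^{*}\bigr), \qquad a\in R_{13},\ b\in R_{31},
\]
which decomposes as the sum of an element of $\{c-c^{*}:c\in R_{11}\}$ and an element of $K\cap R_{33}$; analogously, $[a-a^{*},b-b^{*}]=ba^{*}-ab^{*}\in K\cap R_{12}$ for $a,b\in R_{13}$, and the symmetric calculation in $R_{31}$ produces elements of $K\cap R_{21}$. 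Taking $T$ to consist of all such brackets for $a,b\in X_{13}\cup X_{31}$ and then iterating, one aims to show via the fullness identities that $\langle T\rangle_{\mathrm{Lie}}=[K,K]$.

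The main obstacle lies in this last combinatorial step. The brackets above mix contributions in $\{c-c^{*}:c\in R_{11}\}$ and $K\cap R_{33}$ simultaneously, and these cannot be separated by bracketing against a single element of $K$: the natural projection $X\mapsto e_{3}Xe_{3}$ is not realizable as the adjoint action of any element of $K$, since $e_{3}$ is symmetric (hence not in $K$), while $e-e^{*}\in K$ acts as zero on every diagonal Peirce component. The workaround is to use elements of $K\cap R_{12}$ and $K\cap R_{21}$ as ``shifting operators'' together with the identities $R_{ii}=R_{i3}R_{3i}$ to isolate each diagonal Peirce piece one by one. This is precisely where the hypothesis $R(1-e-e^{*})R=R$ becomes indispensable, as it guarantees the fullness of $e_{0}$ and hence the multiplicative identities linking the diagonal and off-diagonal parts.
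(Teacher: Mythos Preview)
Your proposal does not close; you yourself flag the gap in the final paragraph. Two points deserve emphasis.

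First, the appeal to Theorem~\ref{th1} is a red herring. Knowing that $[R,R]$ is finitely generated tells you nothing about $[K,K]$, because a Lie subalgebra of a finitely generated Lie algebra need not be finitely generated (Example~\ref{ex1} already exhibits this failure inside $[R,R]$ itself). The paper never uses Theorem~\ref{th1} in the proof of Theorem~\ref{th2}.

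Second, and more seriously, your scheme lacks any inductive reduction mechanism. Taking finite subsets $X_{13},X_{31}$ and forming all brackets among $\{a-a^{*}\}$ produces only finitely many elements of the (infinite-dimensional) spaces $K\cap R_{12}$, $K\cap R_{21}$, $K\cap R_{33}$, $\{c-c^{*}:c\in R_{11}\}$; nothing in your outline explains why iterated brackets of those should exhaust $K_{-1}+K_{1}$, let alone $[K,K]$. The ``workaround'' you sketch---using elements of $K\cap R_{12}$ and $K\cap R_{21}$ as shifting operators---is not an argument, and the projection difficulty you identify is real.

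The paper's route is quite different. It packages the nine Peirce pieces into a five-term $\mathbb{Z}$-grading $R=R_{-2}\oplus R_{-1}\oplus R_{0}\oplus R_{1}\oplus R_{2}$ with $R_{\pm2}=e^{\mp}Re^{\pm}$ (writing $e^{+}=e$, $e^{-}=e^{*}$) and $R_{\pm1}$ the two off-diagonal strips touching $s=1-e-e^{*}$. It then proves: (i) $[K,K]$ is generated as a Lie algebra by $K_{-1}+K_{1}$; (ii) $K_{\pm2}=[K_{\pm1},K_{\pm1}]$ and $H_{\pm2}$ is spanned by squares from $K_{\pm1}$; (iii) there are \emph{finite} sets $M_{\pm1}\subset K_{\pm1}$ with $R_{1}=M_{-1}R_{2}+R_{2}M_{-1}$ and symmetrically. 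The engine that replaces your missing step is a combinatorial lemma (Lemma~7) on the finitely generated associative pair $(R_{-2},R_{2})$: any sufficiently long alternating product $a_{2}^{(1)}b_{-2}^{(1)}\cdots a_{2}^{(n+1)}$ with repeated $b$'s can be rewritten as a sum of shorter products times an element of $K_{-2}K_{2}+H_{-2}H_{2}$. Since $[k'_{-2},k'_{2}]$ and $[h'_{-2},h'_{2}]=\sum 2[h'_{-2}\circ k_{p,i},k_{p,i}]$ are Lie expressions in the proposed finite generating set, this yields an honest induction on the length of products and closes the argument. Your outline has no analogue of this length-reduction step, and without it the proof cannot be completed along the lines you describe.
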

The following example shows that the condition on the idempotent cannot be
relaxed.

\begin{example}\label{ex2}
Consider the associative commutative algebra $A=F[x$, $y]/id(x^{2})$ with
the automorphism $\varphi $ of order $2$: $\varphi (x)=-x$, $\varphi (y)=y$.
The algebra $R=M_{2}(A)$ of $2\times 2$ matrices over $A$ has an involution $
\begin{pmatrix}
a & b \\
c & d%
\end{pmatrix}%
\rightarrow
\begin{pmatrix}
d^{\varphi } & b^{\varphi } \\
c^{\varphi } & a^{\varphi }%
\end{pmatrix}%
$. We have $[K, K]\leq xM_{2}(F[y])$, $dim_{F}[K,K]=\infty $, which
implies that algebra $[K, K]$ is not finitely generated.
\end{example}

%\begin{example}\label{ex2}
W.E.Baxter [2] showed that if $R$ is a simple  $F-$ algebra, which is not $\leq 16$ dimensional over its center $Z$ then the Lie algebra $[K,K]/[K,K]\bigcap Z$ is simple.
%\end{example}

\begin{theorem}\label{th3}
Let $R$ be a simple finitely generated $F-$ algebra with an involution $*:R\rightarrow R.$ Suppose that $R$ contains an idempotent $e$ such that $ee^{*}=e^{*}e=0.$ Then the Lie algebra $[K,K]/[K,K]\bigcap Z$ is finitely generated.
\end{theorem}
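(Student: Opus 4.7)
The plan is to reduce Theorem~\ref{th3} to Theorem~\ref{th2} by exploiting simplicity of $R$. Put $f := 1 - e - e^{*}$. From $ee^{*} = e^{*}e = 0$ and $(e^{*})^{2} = e^{*}$ one verifies that $f^{2} = f$, $f^{*} = f$, and $ef = fe = e^{*}f = fe^{*} = 0$, so $f$ is a symmetric idempotent orthogonal to both $e$ and $e^{*}$. Simplicity of $R$ together with $e \neq 0$ forces $ReR = R$. If $f \neq 0$, simplicity also gives $RfR = R$, so the hypotheses of Theorem~\ref{th2} are fulfilled for the pair $(R, e)$; hence $[K,K]$ itself is finitely generated, and in particular so is the quotient $[K,K]/([K,K] \cap Z)$.

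The remaining case $f = 0$, i.e.\ $e + e^{*} = 1$, is the essential one. Here Theorem~\ref{th2} does not apply to $(R, e)$, so my approach would be to enlarge the algebra: pass to $\widetilde R := M_{2}(R)$ endowed with the involution
\[
\begin{pmatrix} a & b \\ c & d \end{pmatrix}^{\sharp} = \begin{pmatrix} a^{*} & c^{*} \\ b^{*} & d^{*} \end{pmatrix}.
\]
Then $\widetilde R$ is a simple, finitely generated $F$-algebra, and the idempotent $\widetilde e := \begin{pmatrix} e & 0 \\ 0 & 0 \end{pmatrix}$ satisfies $\widetilde e\,\widetilde e^{\sharp} = \widetilde e^{\sharp}\widetilde e = 0$ with $1_{\widetilde R} - \widetilde e - \widetilde e^{\sharp} = \begin{pmatrix} 0 & 0 \\ 0 & 1 \end{pmatrix} \neq 0$. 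The previous paragraph applied to $(\widetilde R, \widetilde e)$ then yields that the Lie algebra $[\widetilde K, \widetilde K]$ of $\sharp$-skew commutators is finitely generated.

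The principal obstacle is the \emph{descent} from $[\widetilde K, \widetilde K]$ down to $[K, K]/([K, K] \cap Z)$. The embedding $k \mapsto \begin{pmatrix} k & 0 \\ 0 & 0 \end{pmatrix}$ realizes $[K,K]$ as a Lie subalgebra of $[\widetilde K, \widetilde K]$, but finite generation need not pass to Lie subalgebras. To bridge this I would combine the explicit $\mathbb{Z}/2$-graded bracket structure of $\widetilde K$ (arising from its diagonal/off-diagonal Peirce decomposition with respect to $\widetilde e$ and $\widetilde e^{\sharp}$, in which the diagonal subspace is isomorphic to $K \oplus K$ and the off-diagonal subspace to $R$) with Baxter's theorem, cited just before the theorem, which makes the target $[K, K]/([K, K] \cap Z)$ a simple Lie algebra; together these should force a finite generating set of $[\widetilde K, \widetilde K]$ to produce, after projection onto the $(1,1)$-corner and iterated bracketing, finitely many elements of $K$ whose images generate the simple quotient. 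Carrying out this descent rigorously is the main technical challenge.
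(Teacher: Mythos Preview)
Your reduction to Theorem~\ref{th2} when $f=1-e-e^{*}\neq 0$ is exactly what the paper does. The issue is entirely in the case $e+e^{*}=1$, where your approach diverges from the paper and, as you acknowledge, is incomplete.

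The descent step is a genuine gap, and I do not see how to close it along the lines you sketch. Finite generation of $[\widetilde K,\widetilde K]$ gives no control over a Lie subalgebra such as the $(1,1)$-corner copy of $[K,K]$; projection to a corner is not a Lie homomorphism, and there is no evident quotient map $[\widetilde K,\widetilde K]\to [K,K]/([K,K]\cap Z)$ to transport generators along. Invoking Baxter's simplicity of the target does not help either: simple Lie algebras need not be finitely generated, so simplicity cannot by itself promote ``some finite set hits it nontrivially'' to ``some finite set generates it.'' You also do not treat the situation $\dim_{Z}R<\infty$, where Baxter's theorem is unavailable; the paper disposes of that case separately by noting $Z$ is then a finitely generated field, hence $\dim_F R<\infty$.

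For comparison, the paper attacks $e+e^{*}=1$ directly inside $R$ via the $\mathbb{Z}$-grading $R=R_{-2}+R_{0}+R_{2}$. The key technical engine is a nilpotency result: in the free involutive algebra $F\langle X\rangle$ one builds an ideal $I$ generated by certain iterated products $f(x_i,a_1,a_2,a_3)$ with $a_j$ skew, and shows (using that $kKk=0\Rightarrow k=0$ in a semiprime involutive algebra) that $F\langle X\rangle/I$ is nilpotent of some degree $d(m)$. This yields a uniform length bound: the Jordan pair $(K_{-2},K_{2})$ is generated by skew-symmetrizations $\{a\}$ of products in a fixed finite set of generators $k_i^{\pm}\in K_{\pm 2}$ of length $<d(2m)$. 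Since $[K,K]+Z=K_{-2}+[K_{-2},K_{2}]+K_{2}+Z$, this gives finite generation of $[K,K]/([K,K]\cap Z)$. That nilpotency argument is the substantive content your proposal is missing.
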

\bigskip

\section{Finite generation of Lie algebras $[R,R]$}

Consider the Peirce decomposition $R=eRe+eR(1-e)+(1-e)Re+(1-e)R(1-e)$. The
components $eR(1-e),(1-e)Re$ lie in $[R$, $R]$ since $eR(1-e)=[e,eR(1-e)]$, $%
(1-e)Re=[e,(1-e)Re]$.

\begin{lemma}\label{lem1}
The Lie algebra $[R$, $R]$ is generated by $eR(1-e)+(1-e)Re$.
\end{lemma}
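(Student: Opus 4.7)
The plan is to show that every commutator $[x,y]$ in $R$ lies in the Lie subalgebra $L$ generated by $eR(1-e)+(1-e)Re$, which together with the obvious reverse inclusion $L\subseteq [R,R]$ (each element of $eR(1-e)$ is already a commutator $[e,u]$, and similarly on the other side) will give the result. Expanding $x$ and $y$ via the Peirce decomposition $R=R_{11}+R_{10}+R_{01}+R_{00}$ (writing $R_{11}=eRe$, $R_{10}=eR(1-e)$, $R_{01}=(1-e)Re$, $R_{00}=(1-e)R(1-e)$), it suffices by bilinearity to handle commutators $[R_{ij},R_{k\ell}]$ one type at a time.

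Most of the sixteen cases are immediate. The products $R_{10}R_{10}$, $R_{01}R_{01}$, $R_{11}R_{00}$, $R_{00}R_{11}$ all vanish, so the corresponding commutators are zero. The commutators $[R_{11},R_{10}]$, $[R_{11},R_{01}]$, $[R_{00},R_{10}]$, $[R_{00},R_{01}]$ all land in $R_{10}+R_{01}\subseteq L$. The commutators $[R_{10},R_{01}]$ are Lie brackets of generators, hence in $L$ by construction. This reduces the problem to showing $[R_{11},R_{11}]\subseteq L$ and $[R_{00},R_{00}]\subseteq L$.

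For the $R_{11}$ case, the key leverage comes from the hypothesis $R(1-e)R=R$. This allows us to write any $\alpha\in R_{11}$ as $\alpha=e\alpha e = \sum_i\bigl(es_i(1-e)\bigr)\bigl((1-e)t_ie\bigr) = \sum_i u_iv_i$ with $u_i\in R_{10}$ and $v_i\in R_{01}$. Now for any $\beta\in R_{11}$, the associative identity
\[
[u_iv_i,\beta] \;=\; [u_i,\,v_i\beta] \;+\; [v_i,\,\beta u_i]
\]
expresses $[u_iv_i,\beta]$ as a sum of two Lie brackets. By the Peirce multiplication rules, $v_i\beta\in R_{01}R_{11}\subseteq R_{01}\subseteq L$ and $\beta u_i\in R_{11}R_{10}\subseteq R_{10}\subseteq L$, so each bracket on the right-hand side is a Lie bracket of two elements already in $L$; hence $[\alpha,\beta]=\sum_i[u_iv_i,\beta]\in L$. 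The $R_{00}$ case is symmetric: the other hypothesis $ReR=R$ lets us write any $\alpha\in R_{00}$ as $\sum_j v_ju_j$ with $v_j\in R_{01}$, $u_j\in R_{10}$, and the same identity $[v_ju_j,\beta]=[v_j,u_j\beta]+[u_j,\beta v_j]$ finishes the argument since now $u_j\beta\in R_{10}$ and $\beta v_j\in R_{01}$.

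The only mild obstacle is spotting the right associative identity and noticing that the two conditions $ReR=R$ and $R(1-e)R=R$ are used in a symmetric but complementary way — one to realize $R_{11}$ as $R_{10}R_{01}$, the other to realize $R_{00}$ as $R_{01}R_{10}$. Once both are in hand, the intermediate products $v\beta$ and $\beta u$ automatically land in the off-diagonal pieces that generate $L$, so the induction on complexity of commutators closes in one step rather than requiring any nested ad-action.
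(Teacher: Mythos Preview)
Your proof is correct and follows the same overall strategy as the paper: reduce to showing that $[eRe,eRe]$ and $[(1-e)R(1-e),(1-e)R(1-e)]$ lie in the Lie subalgebra generated by the off-diagonal Peirce pieces, use the hypotheses to factor $eRe=eR(1-e)\cdot(1-e)Re$ (and dually $(1-e)R(1-e)=(1-e)Re\cdot eR(1-e)$), and then apply a commutator identity to push each $[ab,c]$ back into brackets of off-diagonal elements. The only difference is in the identity used. The paper first splits $ab=a\circ b+\tfrac{1}{2}[a,b]$ and then handles the two pieces separately via $[a\circ b,c]=[a,b\circ c]+[b,a\circ c]$ and the Jacobi identity; you instead use the single associative identity $[ab,c]=[a,bc]+[b,ca]$ directly. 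Your route is a touch more economical---one identity rather than two, and no division by $2$---while both arguments actually prove the sharper inclusion $[eRe,eRe]\subseteq [eR(1-e),(1-e)Re]$ (not merely containment in the generated Lie subalgebra).
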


\begin{proof}
We only need to show that $$[eRe,eRe]+[(1-e)R(1-e),(1-e)R(1-e)]\subseteq  \text { Lie } \left\langle eR(1-e),(1-e)Re\right\rangle .$$
From $R=R(1-e)R$ it follows that $eRe=eR(1-e)Re$. Hence an arbitrary element
from $eRe$ can be represented as a sum $\sum\limits_{i} a_{i}b_{i},  a_{i}\in eR(1-e), b_{i}\in (1-e)Re$.
Now $a_{i}b_{i}=a_{i}\circ b_{i}+\frac{1}{2}[a_{i}, b_{i}]$, where $x\circ y=\frac{1}{2}(xy+yx)$.
For an arbitrary element $c\in eRe$ we have
$[a_{i}\circ b_{i}, c]=[a_{i}, b_{i}\circ c]+[b_{i}, a_{i}\circ c]\in[eR(1-e), (1-e)Re]$ and
$[[a_{i}, b_{i}], c]=[a_{i}, [b_{i}, c]]-[b_{i}, [a_{i}, c]]\in[ eR(1-e), (1-e)Re]$.\vspace{.5cm}
We showed that $[eRe,eRe]\subseteq [ eR(1-e), (1-e)Re]$.
The inclusion $[(1-e)R(1-e), (1-e)R(1-e)]\subseteq [ eR(1-e),(1-e)Re] $ is proved similarly. Lemma is proved.
\end{proof}

\begin{definition}
A pair of vector spaces $(A^{-}, A^{+})$ with trilinear products
\newline $A^{+}\times A^{-}\times A^{+}\rightarrow A^{+}, A^{-}\times A^{+}\times A^{-}\rightarrow A^{-},$
$a^{\sigma }\times b^{-\sigma }\times c^{\sigma}\mapsto (a^{\sigma }, b^{-\sigma }, c^{\sigma })\in A^{\sigma },$
$\sigma =+$ or $-$, is called an \textit{associative pair }if it satisfies the identities
$$((x^{\sigma }, y^{-\sigma }, z^{\sigma }), u^{-\sigma }, v^{\sigma })=(x^{\sigma }, (y^{-\sigma }, z^{\sigma }, u^{-\sigma }),
v^{\sigma })=(x^{\sigma }, y^{-\sigma }, (z^{\sigma }, u^{-\sigma },v^{\sigma }))$$
\end{definition}

\begin{example}\label{ex4}
The pair of Peirce components $(eR(1-e), (1-e)Re)$ is an associative pair
with respect to the operations $(a^{\sigma }, b^{-\sigma }, c^{\sigma})=a^{\sigma }b^{-\sigma }c^{\sigma }$.
\end{example}

\begin{lemma}\label{lem2}
 Let $R$ be a finitely generated algebra and let $e$, $f\in R$ be
idempotents such that $ReR=RfR=R$. Then the associative pair $P=(eRf, fRe)$
is finitely generated.
\end{lemma}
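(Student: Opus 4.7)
My plan is to construct an explicit finite generating set for the associative pair $P = (eRf, fRe)$. Let $r_1, \ldots, r_n$ be a finite generating set for $R$ as an associative algebra, and, invoking the hypotheses $ReR = R$ and $RfR = R$, fix local-unit decompositions $1 = \sum_{i=1}^{m_1} a_i e b_i$ and $1 = \sum_{j=1}^{m_2} c_j f d_j$ in $R$. Since $R$ is spanned by monomials in the generators, every element of $eRf$ is a linear combination of monomials of the form $e r_{k_1} r_{k_2} \cdots r_{k_m} f$, and analogously every element of $fRe$ is a linear combination of monomials $f r_{k_1} \cdots r_{k_m} e$.

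The crux of the argument is to rewrite each such monomial as a sum of alternating products $(e u_1 f)(f v_1 e)(e u_2 f) \cdots (e u_\ell f)$ of odd length $\ell$, each factor of which lies in a prescribed finite subset of $eRf$ or of $fRe$. To do this I would insert a copy of $1$ between each pair of consecutive generators, alternating between the two decompositions, using $1 = \sum_j c_j f d_j$ at odd-numbered gaps (to inject an $f$) and $1 = \sum_i a_i e b_i$ at even-numbered gaps (to inject an $e$), and then exploiting $e^2 = e$ and $f^2 = f$ to duplicate the inserted idempotents. The expanded sum then regroups into successive factors lying alternately in $eRf$ and $fRe$.

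Each of these factors depends on bounded data: a single generator $r_k$ together with at most two indices from the two decompositions of $1$. Hence only finitely many distinct factors can ever occur, and collecting them yields finite sets $X^+ \subseteq eRf$ (consisting of elements of the shapes $e r_k f$, $e r_k c_j f$, $e b_i r_k c_j f$, $e b_i r_k f$, $e b_i f$) and $X^- \subseteq fRe$ (consisting of the symmetric shapes $f r_k e$, $f d_j r_k e$, $f d_j r_k a_i e$, $f r_k a_i e$, $f d_j e$). A completely symmetric insertion scheme handles the monomials in $fRe$. The subpair of $P$ generated by $X^+\cup X^-$ therefore contains both $eRf$ and $fRe$, and so exhausts $P$.

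The one real subtlety is a parity issue: the product of an alternating sequence of $\ell$ factors drawn from $eRf$ and $fRe$, beginning in $eRf$, lies in $P^+ = eRf$ precisely when $\ell$ is odd. Depending on the parity of $m$, the naive insertion scheme can end on an $fRf$ factor instead of an $eRf$ factor; to repair this I would insert an additional $1 = \sum_i a_i e b_i$ between $r_{k_m}$ and the trailing $f$ when needed, producing a final short factor of the form $e b_i f \in eRf$. This parity fix is precisely what forces the presence of the short elements $e b_i f$ (and symmetrically $f d_j e$) in the generating sets described above; the rest of the argument is a matter of checking that the finite list of factor shapes really does cover every intermediate factor produced by the expansion.
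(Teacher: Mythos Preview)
Your argument is sound and rests on the same idea as the paper's---use $ReR=RfR=R$ to plant copies of $e$ and $f$ inside long monomials---but the execution is different. The paper writes each generator $a_i$ as $\sum_k \alpha_{ik} u_{ik} e v_{ik} = \sum_t \beta_{it} u'_{it} f v'_{it}$ with the $u$'s and $v$'s words of length $\le d$, and then argues by induction on the length $N$ of a word: for $N>3d+1$, trisect $u=u_1 a_i u_2 a_j u_3$ with each $u_\ell$ of length about $N/3$, substitute the $f$-decomposition for $a_i$ and the $e$-decomposition for $a_j$, and observe that the three resulting factors $e u_1 p'_k f$, $f q'_k u_2 p''_t e$, $e q''_t u_3 f$ all have length $<N$. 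Their generating set is therefore $\{euf,\ fue : \text{length}(u)\le 3d+1\}$. You instead insert $1=\sum c_j f d_j$ or $1=\sum a_i e b_i$ at \emph{every} gap simultaneously, obtaining a one-shot factorization with no induction; your generators are the explicit short forms $e r_k c_j f$, $f d_j r_k a_i e$, $e b_i f$, and their companions. Both routes work; yours is more directly constructive, the paper's more parsimonious (two insertions per inductive step rather than one at every gap). One caveat: you tacitly assume $R$ is unital when writing $1=\sum a_i e b_i$, whereas the paper does not (see the remark immediately following the lemma). The repair is immediate---decompose each generator $r_k$ rather than the unit, exactly as the paper does---and then your alternating-insertion scheme survives unchanged.
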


\begin{remark}
In [8] it is proved that if $R$ is a finitely generated algebra, $e\in R$ is
an idempotent such that $ReR=R$ then the Peirce component $eRe$ is a
finitely generated algebra.
\end{remark}

\begin{proof}
Suppose that the algebra $R$ is generated by elements $a_{1},\ldots , a_{m}$. Suppose further that
$a_{i}=\sum\limits_{k}\alpha_{ik}u_{ik}ev_{ik}=\sum\limits_{t}\beta _{it}u_{it}^{\prime }fv_{it}^{\prime },$
where $1\leq i\leq m;$ $\alpha _{ik},\beta _{it}\in F;$ $u_{ik}, v_{ik},u_{it}^{\prime }, v_{it}^{\prime }$
are products in generators $a_{1},\ldots,a_{m}.$ Let $d$ denote the maximum lengths of the products
$u_{ik},v_{ik}, u_{it}^{\prime }, v_{it}^{\prime }$ for all $i, k, t$.
We claim that the pair $P$ is generated by elements $euf$, $fue$, where $u$
runs over all products in $a_{1},\ldots, a_{m}$ of length $\leq 3d+1$.
To prove the claim we need to show that for an arbitrary product
$u=a_{i_{1}}\cdots a_{i_{N}}$ of length $N>3d+1$ the elements $euf$, $fue$
lie in the subpair generated by $evf$, $fve$, where $v$ runs over all
products in $a_{1},\ldots, a_{m}$ of length $<N$.
\vspace{.5cm}
There exist integers $N_{1}, N_{2}, N_{3}$ such that $N/3-1<N_{i}\leq N/3 ,1\leq i\leq 3$ and $N=N_{1}+N_{2}+N_{3}+2$.
Let $u=u_{1}a_{i}u_{2}a_{j}u_{3}$, length $(u_{i})=N_{i}$, $1\leq i\leq 3$.
Then
$a_{i}=\sum\limits_{k}\beta _{k}p_{k}^{\prime }fq_{k}^{\prime }, a_{j}=\sum\limits_t\gamma _{t}p_{t}^{\prime \prime }eq_{t}^{\prime\prime }, $
where $\beta _{k},\gamma _{t}\in F;$ $p_{k}^{\prime }, q_{k}^{\prime }, p_{t}^{\prime \prime }, q_{t}^{\prime \prime }$ are
products in $a_{1}, \ldots , a_{m}$ of length $\leq d$.
Now $euf=\sum\limits_{k,t}\beta _{k}\gamma_{t}eu_{1}p_{k}^{\prime }fq_{k}^{\prime }u_{2}p_{t}^{\prime \prime}eq_{t}^{\prime \prime }u_{3}f.$
The lengths of the products $u_{1}p_{k}^{\prime }, q_{k}^{\prime }u_{2}p_{t}^{\prime \prime },q_{t}^{\prime \prime }u_{3}$  are less than $N$. The element $fue$ is treated similarly.
Lemma is proved.
\end{proof}

\begin{remark}
We don't assume that the algebra $R$ of Theorem 1 is unital. However passing
to the unital hull we see that if $R$ is a finitely generated algebra, $e\in R$ is an idempotent such that $ReR=R(1-e)R=R$ then the associative pair
$(eR(1-e),(1-e)Re)$ is finitely generated.
\end{remark}
We will need some definitions from Jordan theory.

\begin{definition}
An algebra over a field $F$ of characteristic $\neq 2$ with
multiplication $a\circ b$ is called a \textit{Jordan algebra} if it
satisfies the identities
\begin{itemize}
  \item[(J1)] $x\circ y=y\circ x$
  \item[(J2)] $(x^{2}\circ y)\circ x=x^{2}\circ (y\circ x).$
\end{itemize}
\end{definition}

For references on Jordan algebras see [5, 7, 9].

An arbitrary associative algebra $A$ gives rise to the Jordan algebra $$A^{(+)}=(A, a\circ b=\frac{1}{2}(ab+ba)).$$

\begin{definition}
A pair of F-spaces $P=(P^{-}$, $P^{+})$ with trilinear products
\newline $P^{\sigma}\times P^{-\sigma }\times P^{\sigma }\rightarrow P^{\sigma }$,
$a^{\sigma}\times b^{-\sigma }c^{\sigma }\rightarrow \{a^{\sigma }, b^{-\sigma }, c^{\sigma }\}\in P^{\sigma };$ $\sigma =+$ or $-;$
\newline $a^{\sigma }, c^{\sigma }\in P^{\sigma }, b^{-\sigma }\in P^{-\sigma },$
is called a Jordan pair if it satisfies the following identities and all their linearizations:
\begin{itemize}
  \item[(J1)] $\{x^{\sigma }, y^{-\sigma },\{x^{\sigma }, z^{-\sigma }, x^{\sigma }\}\}=\{x^{\sigma }, \{y^{-\sigma }, x^{\sigma }, z^{-\sigma}\},x^{\sigma }\}$,
  \item[(J2)] $\{\{x^{\sigma },y^{-\sigma },x^{\sigma }\},y^{-\sigma },z^{-\sigma }\}=\{x^{\sigma },\{y^{-\sigma },x^{\sigma },y^{-\sigma}\},z^{\sigma}\}$,
  \item[(J3)] $\{\{x^{\sigma},y^{-\sigma },x^{\sigma }\},z^{-\sigma },\{x^{\sigma },y^{-\sigma },x^{\sigma }\}\}=\{x^{\sigma },\{y^{-\sigma },\{x^{\sigma},z^{-\sigma }, x^{\sigma }\},y^{-\sigma }\},x^{\sigma }\}$.
\end{itemize}

An arbitrary associative pair $A=(A^{-}, A^{+}), a^{\sigma }\times b^{-\sigma }\times c^{\sigma }\rightarrow (a^{\sigma }, b^{-\sigma }, c^{\sigma
})\in A^{\sigma }$ gives rise to the Jordan pair $A^{(+)}=(A^{-}, A^{+})$
with operations $\{a^{\sigma }, b^{-\sigma }, c^{\sigma }\}=(a^{\sigma },b^{-\sigma }, c^{\sigma })+(c^{\sigma }, b^{-\sigma }, a^{\sigma
})\in A^{\sigma }$, $\sigma =+$ or $-$.
\end{definition}

For further properties of Jordan pairs see [6].

J.M. Osborn (see [4]) showed that a finitely generated associative algebra $R$
gives rise to the finitely generated Jordan algebra $R^{(+)}$.

\begin{lemma}
Let $A=(A^{-},A^{+})$ be a finitely generated associative pair. Then the Jordan pair $A^{(+)}$ is also finitely generated.
\end{lemma}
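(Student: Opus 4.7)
The plan is to adapt Osborn's argument for associative algebras (cited in the introduction as [4]) to the associative-pair setting. First I would fix finite subsets $X^{+}\subseteq A^{+}$ and $X^{-}\subseteq A^{-}$ generating $A$ as an associative pair; then every element of $A^{\sigma}$ is an $F$-linear combination of alternating odd-length pair monomials $(a_{1},a_{2},\ldots ,a_{2k+1})$ with $a_{1}\in X^{\sigma}$ and $a_{i+1}$ of opposite type to $a_{i}$. Let $S\subseteq A^{+}\cup A^{-}$ consist of $X^{+}\cup X^{-}$ together with all length-$3$ pair products $(x,y,z)$ with $x,z\in X^{\sigma}$ and $y\in X^{-\sigma}$; this is still a finite set. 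Write $J$ for the Jordan subpair of $A^{(+)}$ generated by $S$. The goal is to show by induction on length that every alternating monomial in $X$ lies in $J$.

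The cases of length $1$ and $3$ hold by the very construction of $S$. For the inductive step at length $2k+1\geq 5$, take a monomial $m=(a_{1},\ldots ,a_{2k+1})$. Using the associativity of the pair product, I would decompose $m$ in the three natural ways as $(m_{1},a_{2k},a_{2k+1})$, $(a_{1},a_{2},m_{2})$ and $(a_{1},m_{3},a_{2k+1})$, where $m_{1}=(a_{1},\ldots ,a_{2k-1})$, $m_{2}=(a_{3},\ldots ,a_{2k+1})$ and $m_{3}=(a_{2},\ldots ,a_{2k})$ are all shorter and, by the induction hypothesis, already lie in $J$. Applying the Jordan-pair triple product to each decomposition produces identities such as
\[
\{m_{1},a_{2k},a_{2k+1}\}=m+(a_{2k+1},a_{2k},m_{1})\in J
\]
and its analogues from the other splittings; the Jordan-pair symmetry $\{x,y,z\}=\{z,y,x\}$, the identities coming from the reversed monomial $(a_{2k+1},\ldots ,a_{1})$, and the freedom to vary the split point all yield further linear relations whose right-hand sides lie in $J$ and whose unknowns are the length-$(2k+1)$ monomials formed from the alphabet $\{a_{1},\ldots ,a_{2k+1}\}$ by type-preserving permutations.

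The main obstacle is the combinatorial linear-algebra step that concludes the induction: one must verify that the system of relations obtained above has full rank, so that the individual monomial $m$ itself, and not merely its symmetrizations, is recoverable modulo $J$. This is the Jordan-pair analogue of the observation implicit in Osborn's proof that, once enough short associative monomials are incorporated into the Jordan generating set, the longer ones are no longer obstructed by tetrad-type Jordan identities. I expect the bookkeeping here to be the only delicate point; once that rank statement is established the induction closes, $J=A^{(+)}$, and the lemma follows.
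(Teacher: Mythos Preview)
Your plan has a genuine gap at exactly the point you flag as ``the main obstacle'': the linear system you obtain is \emph{not} of full rank, already at length five. Work in the free associative pair on $a_{1},a_{2},a_{3}\in A^{+}$ and $b_{1},b_{2}\in A^{-}$, and look at the $12$-dimensional space $V$ of multilinear length-five monomials $a_{\sigma(1)}b_{\tau(1)}a_{\sigma(2)}b_{\tau(2)}a_{\sigma(3)}$, $(\sigma,\tau)\in S_{3}\times S_{2}$. Every Jordan triple you can form from your set $S$ that lands multilinearly in $V$ is an $F$-combination of the relations
\[
[\sigma,\tau]+[\sigma\!\cdot\!(132),\,\tau\!\cdot\!(12)]\quad\text{and}\quad [\sigma,\tau]+[\sigma\!\cdot\!(13),\,\tau],
\]
coming respectively from $\{a_{\sigma(1)}b_{\tau(1)}a_{\sigma(2)},\,b_{\tau(2)},\,a_{\sigma(3)}\}$ and $\{a_{\sigma(1)},\,b_{\tau(1)}a_{\sigma(2)}b_{\tau(2)},\,a_{\sigma(3)}\}$; varying the split point or reversing only produces these same relations again. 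But the linear functional $\phi([\sigma,\tau])=\mathrm{sgn}(\sigma)\,\mathrm{sgn}(\tau)$ kills every such relation (since $\mathrm{sgn}(132)=1$, $\mathrm{sgn}(12)=-1$, $\mathrm{sgn}(13)=-1$), while $\phi(a_{1}b_{1}a_{2}b_{2}a_{3})=1$. Hence $a_{1}b_{1}a_{2}b_{2}a_{3}\notin J$ and the induction cannot close. This is precisely the Jordan-pair analogue of the tetrad obstruction in the algebra case; Osborn's theorem does not say one can generate $R^{(+)}$ from the associative generators plus short words of one fixed length.

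The paper's proof sidesteps this obstruction entirely. Instead of trying to reduce \emph{every} long monomial, it shows (via explicit identities) that a monomial $a_{i_{1}}^{+}a_{j_{1}}^{-}\cdots a_{i_{s+1}}^{+}$ is a Jordan expression in strictly shorter monomials \emph{whenever two of the plus-indices $i_{1},\dots,i_{s+1}$ coincide}. That hypothesis is exactly what makes the reduction go through, and it is automatic once $s+1$ exceeds the number $m$ of plus-generators. The finite Jordan generating set therefore consists of all associative monomials with pairwise distinct plus-indices (and, symmetrically, distinct minus-indices), a set of bounded length rather than length $\le 3$. To repair your argument you would need to enlarge $S$ in this way and reorganise the induction around repetitions rather than around raw length.
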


\begin{proof}
Suppose that the pair $A$ is generated by elements $a_{1}^{+}, \ldots , a_{m}^{+}\in A^{+};$ \newline $a_{1}^{-}, \ldots ,a_{m}^{-}\in A^{-}$. Consider an
(associative) product $a=a_{i_{1}}^{+}a_{j_{1}}^{-}a_{i_{2}}^{+}\cdots a_{j_{s}}^{-}a_{i_{s+1}}^{+}\in A^{+}.$
We claim that if the indices $i_{1}, \ldots , i_{s+1}$ are not all distinct then $a$ is a Jordan expression in shorter products.
We have
\begin{alignat*}{5}
x^{+}y^{-}u^{+}v^{-}u^{+}&=(x^{+}y^{-}u^{+}v^{-}u^{+}+u^{+}v^{-}x^{+}y^{-}u^{+})-u^{+}v^{-}x^{+}y^{-}u^{+}\\
                          &=\{x^{+}y^{-}u^{+}, v^{-}u^{+}\}-\frac{1}{2}\{u^{+}, v^{-}x^{+}y^{-}u^{+}\}\, \tag{1}
\end{alignat*}
Linearizing this equality in $u^{+}$ (see [9]) we get
\begin{equation*}
x^{+}y^{-}\{u_{1}^{-}, v^{-}, u_{2}^{+}\}=\{x^{+}y^{-}u_{1}^{+}v^{-} , u_{2}^{+}\}+\{x^{+}y^{-}u_{2}^{+}, v^{-}, u_{1}^{+}\}-\{u_{1}^{+}, v^{-}x^{+}y^{-}, u_{2}^{+}\}\,\tag{2}.
\end{equation*}
Now (1) and (2) imply
\begin{alignat*}{5}
x^{+}y^{-}u^{+}v^{-}u^{+}z^{-}t^{+}&=x^{+}y^{-}(u^{+}v^{-}u^{+}z^{-}t^{+}+t^{+}z^{-}u+v^{-}u^{+})-x^{+}y^{-}t^{+}z^{-}u^{+}v^{-}u^{+}\\
&=\frac{1}{2}x^{+}y^{-}\{\{u^{+}v^{-}u^{+}\}, z^{-}, t^{+}\}-x^{+}y^{-}t^{+}z^{-}u^{+}v^{-}u^{+}\\
&=\frac{1}{2}x^{+}y^{-}\{\{u^{+}v^{-}u^{+}\}, z^{-}, t^{+}\}+\frac{1}{2}\{x^{+}y^{-}t^{+}, z^{-}, \{u^{+}, v^{-}, u^{+}\}\}\\
&-\frac{1}{2}\{\{u^{+}, v^{-},u^{+}\}, z^{-}x^{+}y^{-}, t^{+}\}-\{x^{+}y^{-}t^{+}z^{-}u^{+}, v^{-}, u^{+}\}\\
&+\frac{1}{2}\{u^{+},z^{-}t^{+}y^{-}x^{+}v^{-}, u^{+}\}\,\tag{3}
\end{alignat*}
If $i_{p}=i_{q}$, $p<q$, then $a$ is an expression of the same type as the
left hand sides of (1), (3). Hence $a$ is a Jordan expression in shorter products.

We showed that the Jordan pair $A^{(+)}$ is generated by elements $%
a_{i_{1}}^{+}a_{j_{1}}^{-}a_{i_{2}}^{+}\cdots a_{i_{s+1}}^{+}$, where the
indices $i_{1}, \ldots , i_{s+1}$ are distinct, and elements $%
a_{j_{1}}^{-}a_{i_{1}}^{+}\cdots a_{i_{s}}^{+}a_{j_{s+1}}$, where $j_{i}
,\ldots , j_{s+1}$ are distinct. Hence the pair $A^{(+)}$ is finitely
generated.
\end{proof}

\subsubsection*{Proof of Theorem 1.}
\begin{proof}
Let $R$ be a finitely generated associative algebra with an idempotent $e$ such that $R=ReR=R(1-e)R$. By Lemma 2 the associative
pair \newline $A=(eR(1-e), (1-e)Re)$ is finitely generated. By Lemma 3 the Jordan
pair $A^{(+)}$ is finitely generated as well. For arbitrary elements $a^{\sigma },c^{\sigma }\in A^{\sigma },b^{-\sigma }\in A^{\sigma }$,
$\sigma =+$ or $-$, we have $\{a^{\sigma },b^{-\sigma }, c^{\sigma}\}=[[a^{\sigma },b^{-\sigma }],c^{\sigma }]$. This implies that
generators of the Jordan pair $A^{(+)}$ generate
the Lie algebra $A^{-}+[A^{-},A^{+}]+A^{+}$. Now it remains to recall that
$A^{-}+[A^{-}, A^{+}]+A^{+}=[R,R]$ by Lemma 1. Theorem is proved.
\end{proof}

\bigskip

\section{Finite generation of Lie algebras $[K,K].$}

Let $R$, $\ast :R\rightarrow R$, be an involutive algebra with an idempotent
$e$ satisfying $ee^{\ast }=e^{\ast }e=0$ and $ReR=R(1-e-e^{\ast })R=R$. Let $s=1-e-e^{\ast }\neq 0$.
For an arbitrary element $a\in R$ we denote $\{a\}=a-a^{\ast }\in K$.
Let $R_{-2}=eRe^{\ast },$ $R_{-1}=eRs+sRe^{\ast }$, $R_{0}=eRe+e^{\ast
}Re^{\ast }+sRs$, $R_{1}=e^{\ast }Rs+sRe$, $R_{2}=e^{\ast }Re$. Then $R=R_{-2}+R_{-1}+R_{0}+R_{1}+R_{2}$ is a $\mathbb{Z}$-grading.

Denote $K_{i}=K\cap R_{i},$ $H_{i}=H\cap R_{i},$ where $H=\{a\in R|a^{\ast }=a\}.$

\begin{remark}
By Lemma 2 the associative pair $(R_{-2}, R_{2})$ is finitely generated.
The restriction of $\ast $ is an involution of the pair $(R_{-2}, R_{2})$.
However, $-\ast $ is also an involution of $(R_{-2}, R_{2})$. The Jordan pair $(K_{-2}, K_{2})$ is
$K((R_{-2},R_{2}),\ast )=H((R_{-2}, R_{2}), -\ast )$. An analog of
Lemma 3 for associative pairs is not true: the Jordan pair of symmetric
elements of a finitely generated involutive associate pair may be not
finitely generated. An example can be derived from the Example 2 above.
\end{remark}
\bigskip

\begin{lemma}
$K_{2}=[K_{1}, K_{1}],$ $H_{2}=\text{span}_{F}\{k^{2} | k\in K_{1}\}$ and,
similarly, $K_{-2}=[K_{-1}, K_{-1}],$ $H_{-2}=\text{span}_{F}\{k^{2} | k\in K_{-1}\}$.
\end{lemma}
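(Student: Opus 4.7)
The plan is to parametrize the graded pieces concretely using the grading idempotents and then read off the identities from direct multiplication, feeding off the hypothesis $R(1-e-e^{*})R=R$, i.e.\ $RsR=R$.

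First I would describe $K_{1}$ and $R_{2}$ explicitly. Since $*$ swaps $e^{*}Rs$ with $sRe$, the involution preserves the grading components $R_{\pm 1},R_{0}$ but swaps $R_{2}$ with $R_{-2}$. Any element $a\in R_{1}$ has the form $a=e^{*}r_{1}s+sr_{2}e$, and a short computation shows that every $k\in K_{1}$ can be written uniquely as
\[
k=e^{*}ys-sy^{*}e
\]
for some $y\in R$ (with only $sys$ relevant). Similarly $K_{-1}=\{eys^{*}-sy^{*}e^{*}\}$ style. Meanwhile $R_{2}=e^{*}Re$, and using $R=RsR$ we have $R_{2}=\mathrm{span}_{F}\{e^{*}usve:u,v\in R\}$; consequently $K_{2}=\mathrm{span}_{F}\{e^{*}(usv-v^{*}su^{*})e\}$ and $H_{2}=\mathrm{span}_{F}\{e^{*}(usv+v^{*}su^{*})e\}$.

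Next I would carry out the product computations. Because $s^{2}=s$ and $se=es=se^{*}=e^{*}s=ee^{*}=e^{*}e=0$, for $k=e^{*}ys-sy^{*}e$ and $k'=e^{*}y's-sy'^{*}e$ in $K_{1}$ all but one cross term in $kk'$ collapse and one obtains
\[
kk'=-e^{*}ysy'^{*}e,\qquad [k,k']=e^{*}(y'sy^{*}-ysy'^{*})e,\qquad k^{2}=-e^{*}ysy^{*}e.
\]
Setting $y=v$, $y'=u^{*}$ in the commutator yields $[k,k']=e^{*}(u^{*}sv^{*}-vsu)e$; but the generic generator of $K_{2}$ given above is $e^{*}(usv-v^{*}su^{*})e$, which is minus this element. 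Thus every generator of $K_{2}$ is $[K_{1},K_{1}]$, and the reverse inclusion $[K_{1},K_{1}]\subseteq K_{2}$ is automatic from the $\mathbb Z$-grading and $K$ being a Lie subalgebra.

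For $H_{2}$ I would polarize the square: for $k,k'\in K_{1}$,
\[
kk'+k'k=-e^{*}(ysy'^{*}+y'sy^{*})e,
\]
so $k\circ k'=\tfrac12(kk'+k'k)$ is already a linear combination of squares (via $2k\circ k'=(k+k')^{2}-k^{2}-k'^{2}$). Taking $y=u$, $y'=v^{*}$ recovers $-\tfrac12(e^{*}usve+e^{*}v^{*}su^{*}e)$, which up to sign is the generic generator of $H_{2}$. This yields $H_{2}\subseteq\mathrm{span}_{F}\{k^{2}:k\in K_{1}\}$, and the reverse inclusion follows from $(k^{2})^{*}=(k^{*})^{2}=k^{2}$ together with $K_{1}\cdot K_{1}\subseteq R_{2}$. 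The statements for $K_{-2}$ and $H_{-2}$ are obtained by applying $*$ (equivalently, swapping the roles of $e$ and $e^{*}$) and repeating verbatim.

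No single step is technically hard; the only mild obstacle is verifying that $R_{2}$ (and hence $K_{2},H_{2}$) really is spanned by \emph{words that run through $s$ in the middle}, which is exactly where the hypothesis $R=RsR$ is used and without which the argument would fail.
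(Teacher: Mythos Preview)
Your argument is correct and is essentially the paper's own proof written in coordinates: the paper observes directly that $e^{*}vswe=\{e^{*}vs\}\{swe\}$ (where $\{a\}=a-a^{*}$), which is exactly your identity $kk'=-e^{*}ysy'^{*}e$ after the substitution $y=v$, $y'^{*}=w$, and then deduces $K_{2}=\{K_{1}K_{1}\}=[K_{1},K_{1}]$ and $H_{2}=\mathrm{span}\{k_{1}k_{2}+k_{2}k_{1}\}=\mathrm{span}\{k^{2}\}$ just as you do. The only cosmetic difference is that you first parametrize $K_{1}$ while the paper factors $e^{*}Re$ through $s$ at the outset; both rest on the same use of $R=RsR$ and the same Peirce orthogonality relations.
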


\begin{proof}
Recall that the algebra $R$ is generated by the elements $a_{1},\ldots,a_{m}$.
An arbitrary generator $a_{i}$ can be represented as $a_{i}=\sum\limits_{j}\alpha _{ij}v_{ij}sw_{ij}$, where $\alpha _{ij}\in F;$ $v_{ij},$ $
w_{ij}$ are products in generators $a_{1},\ldots ,a_{m}$ (may be, empty).
Let $a=a_{i_{1}}\cdots a_{i_{r}}$ be an arbitrary product of generators.
Applying the equalities above to the generator $a_{i_{1}}$ we get
\begin{alignat*}{5}
e^{\ast }ae&=\sum \alpha _{i_{1}j}e^{\ast}v_{i_{1}j}sw_{i_{1}j}a_{i_{2}}\cdots a_{i_{r}}e\\
&=\sum \alpha _{i_{1}j}\{e^{\ast }v_{i_{1}j}s\}\{sw_{i_{1}j}a_{i_{2}}\cdots a_{i_{r}}e\}\in K_{1}K_{1}.
\end{alignat*}

Now
$K_{2}=\{e^{\ast }Re\}=\{K_{1}K_{1}\}=[K_{1},K_{1}];$
\begin{alignat*}{5}
H_{2}&=\{e^{\ast }ae+e^{\ast }a^{\ast }e |a\in R\}\\
&=\{k_{1}k_{2}+(k_{1}k_{2})^{\ast }|k_{1},k_{2}\in K_{1}\}\\
&=\text{ span} _{F}\{k^{2} |k\in K_{1}\}.
\end{alignat*}
 Lemma is proved.
\end{proof}

\bigskip

\begin{lemma}
There exists a finite subset $M_{-1}\subset K_{-1}$ such that $
R_{1}=M_{-1}R_{2}+R_{2}M_{-1}$. Similarly, there exists a finite subset $
M_{1}\subset K_{1}$ such that $R_{-1}=M_{1}R_{-2}+R_{-2}M_{1}$.
\end{lemma}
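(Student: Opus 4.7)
The plan is to take $M_{-1}:=\{x_i-x_i^*:1\le i\le p\}$, where $\{x_1,\dots,x_p\}$ is the $eRs$-component of a finite generating set of the associative pair $(eRs,sRe)$, and to verify the identity $R_1=M_{-1}R_2+R_2M_{-1}$ by Peirce component tracking. Since $ReR=RsR=R$, Lemma \ref{lem2} applied in the unital hull to the idempotents $e$ and $s$ shows that $(eRs,sRe)$ is indeed finitely generated; fix generators $\{x_1,\dots,x_p\}\subset eRs$ and $\{y_1,\dots,y_q\}\subset sRe$. The two assertions of the lemma are interchanged by the substitution $e\leftrightarrow e^*$ (note $Re^*R=(ReR)^*=R$), so it suffices to construct $M_{-1}$. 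Since $s^*=s$ we have $(eRs)^*=sRe^*$ and $M_{-1}^*=-M_{-1}$, hence $M_{-1}$ is a finite subset of $K\cap(eRs+sRe^*)=K_{-1}$.

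The Peirce annihilations $R_2\cdot sRe^*=0$ (from $es=0$) and $eRs\cdot R_2=0$ (from $se^*=0$) give
\[
R_2 M_{-1}=R_2\cdot\{x_1,\dots,x_p\}\subseteq e^*Rs,\qquad M_{-1}R_2=-\{x_1^*,\dots,x_p^*\}\cdot R_2\subseteq sRe.
\]
Applying $*$ to the first identity (using $R_2^*=R_2$ and $M_{-1}^*=-M_{-1}$) turns it into the second, so the lemma reduces to proving $R_2\cdot\{x_1,\dots,x_p\}=e^*Rs$.

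From $R=ReR$ one has $e^*Rs=e^*ReRs=R_2\cdot(eRs)$, so every element of $e^*Rs$ is a sum $\sum_i r_i\xi_i$ with $r_i\in R_2$, $\xi_i\in eRs$. By finite generation of the pair, each $\xi_i$ is a linear combination of alternating monomials $\omega=x_{j_1}y_{l_1}x_{j_2}\cdots x_{j_k}$. The Peirce inclusions $R_2\cdot eRs\subseteq e^*Rs$ and $e^*Rs\cdot sRe\subseteq R_2$ imply that the prefix $r_ix_{j_1}y_{l_1}\cdots y_{l_{k-1}}$ of the expanded product $r_i\omega$ already lies in $R_2$, so by associativity
\[
r_i\omega=\bigl(r_ix_{j_1}y_{l_1}\cdots y_{l_{k-1}}\bigr)\cdot x_{j_k}\in R_2\cdot x_{j_k}
\]
(the case $k=1$ being $r_i\cdot x_{j_1}$). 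Summing, $e^*Rs\subseteq\sum_{i=1}^p R_2x_i$, which finishes the argument.

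The second assertion follows by the symmetric procedure: Lemma \ref{lem2} applied to $(e^*Rs,sRe^*)$ yields finite generators $\{x_1',\dots,x_{p'}'\}\subset e^*Rs$, and setting $M_1:=\{x_i'-(x_i')^*:i\}\subset K_1$ gives $R_{-1}=M_1R_{-2}+R_{-2}M_1$ by the same Peirce bookkeeping with $R_{-2}$ in place of $R_2$. The main conceptual step is recognizing that the pair to invoke Lemma \ref{lem2} on is $(eRs,sRe)$ rather than the $*$-invariant pair $(R_{-2},R_2)$; once the alternating-monomial description of the chosen pair is in view, the Peirce bookkeeping closes the proof essentially mechanically.
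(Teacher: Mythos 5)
Your proposal is correct, but it takes a genuinely different route from the paper. The paper's proof never mentions the pair $(eRs,sRe)$: it fixes expressions $a_i=\sum_j\alpha'_{ij}v'_{ij}ew'_{ij}$ for the generators of $R$ (possible since $ReR=R$), and for a word $a=a_{i_1}\cdots a_{i_r}$ rewrites $e^{*}as=\sum_j\alpha'_{i_rj}\,\bigl(e^{*}a_{i_1}\cdots a_{i_{r-1}}v'_{i_rj}e\bigr)\{ew'_{i_rj}s\}$, the extra term of $\{ew'_{i_rj}s\}=ew'_{i_rj}s-s(w'_{i_rj})^{*}e^{*}$ dying against the left factor in $R_2$ because $es=0$; so one may take $M_{-1}=\{\{ew'_{ij}s\}\}$, a finite set read off directly from the presentation, and the $sRe$ half follows by applying $*$. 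You instead invoke Lemma \ref{lem2} a second time, for the idempotents $e$ and $s$, take $M_{-1}$ to be the skew-symmetrizations of the $eRs$-generators of the pair $(eRs,sRe)$, and strip the last pair-generator off an alternating monomial by Peirce-degree tracking; your verifications (that $s$ is an idempotent with $RsR=R$, the annihilations $es=se^{*}=0$, the identity $e^{*}Rs=R_2\cdot eRs$ from $R=ReR$, and the spanning of $eRs$ by odd alternating monomials) are all sound, and the second half is correctly reduced by $*$-symmetry. What each approach buys: the paper's argument is shorter and more elementary, needing only the fixed decomposition of the generators of $R$ (essentially the same rewriting trick that proves Lemma \ref{lem2} itself), whereas yours is a cleaner conceptual packaging — it identifies $M_{-1}$ with a generating set of an auxiliary associative pair, makes the $*$-duality between the two statements explicit, and in fact yields the sharper equalities $R_2M_{-1}=e^{*}Rs$ and $M_{-1}R_2=sRe$ rather than just the sum decomposition of $R_1$.
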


\begin{proof}
Represent each generator $a_{i}$ as $a_{i}=\sum\limits_{j}\alpha_{ij}^{\prime }v_{ij}^{\prime }ew_{ij}^{\prime },$ where
$\alpha_{ij}^{\prime }\in F;$ $v_{ij}^{\prime },$ $w_{ij}^{\prime }$ are products in $a_{1},\ldots , a_{m}$(may be, empty).
Consider an element $e^{\ast}as $, where $a=a_{i_{1}}\cdots a_{i_{r}}$ and apply the decomposition above to $a_{i_{r}}$. We'll get
\begin{alignat*}{5}
e^{\ast }as&=\sum \alpha _{i_{r}j}^{\prime }e^{\ast }a_{i_{1}}\cdots a_{i_{r-1}}v_{i_{r}j}^{\prime }ew_{i_{r}j}^{\prime }s\\
&=\sum \alpha_{i_{r}j}^{\prime }e^{\ast }a_{i_{1}}\cdots a_{i_{r-1}}v_{i_{r}j}^{\prime}\{ew_{i_{r}j}^{\prime }s\}.
\end{alignat*}
It remains to choose $M_{-1}=\{\{ew_{i_{r}j}^{\prime }s\}\}\subseteq K_{-1}.$ Lemma is proved.
\end{proof}

\bigskip

\begin{lemma}
\begin{itemize}
  \item[(1)]$\sum\limits_{i\neq 0}K_{i}\subseteq [K,K];$
  \item[(2)] $[K,K]$ is generated by $K_{-1},K_{1}$.
\end{itemize}
\end{lemma}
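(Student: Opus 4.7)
For part (1), I would introduce the element $h=e-e^{*}$, which lies in $K_{0}$ since $h^{*}=-h$. A direct check using $e^{2}=e$, $(e^{*})^{2}=e^{*}$, $s^{2}=s$ and the orthogonality relations $ee^{*}=e^{*}e=es=se=e^{*}s=se^{*}=0$ will show that $[h,x]=-ix$ for every $x\in R_{i}$. Since $\mathrm{char}\,F\neq 2$, each $x\in K_{i}$ with $i\neq 0$ is then a nonzero scalar multiple of $[h,x]\in[K,K]$, proving $\sum_{i\neq 0}K_{i}\subseteq[K,K]$.

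For part (2), let $L$ denote the Lie subalgebra generated by $K_{-1}\cup K_{1}$. The inclusion $L\subseteq[K,K]$ is immediate from (1). For the reverse inclusion, I would use that $[K,K]$ inherits the $\mathbb{Z}$-grading, so $[K,K]=\bigoplus_{i}[K,K]_{i}$ with $[K,K]_{i}\subseteq K_{i}$. Part (1) together with Lemma 4 forces $[K,K]_{i}=K_{i}\subseteq L$ for $i\neq 0$, while $[K,K]_{0}=[K_{0},K_{0}]+[K_{-1},K_{1}]+[K_{-2},K_{2}]$ with the latter two summands already in $L$. Everything thus reduces to proving $[K_{0},K_{0}]\subseteq L$.

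For this, I would exploit the fine decomposition $K_{0}=K_{0}^{(1)}+K_{0}^{(2)}$, where $K_{0}^{(1)}=\{a-a^{*}:a\in eRe\}$ is the contribution from $eRe+e^{*}Re^{*}$ and $K_{0}^{(2)}=K\cap sRs$. By orthogonality, $(eRe)(sRs)=(sRs)(eRe)=0$, so $[K_{0}^{(1)},K_{0}^{(2)}]=0$. The heart of the proof is the claim $K_{0}^{(j)}\subseteq L+K_{0}^{(3-j)}$ for $j\in\{1,2\}$. For $j=1$, given $a\in eRe$, I would use $R=RsR$ to write $a=\sum u_{i}v_{i}$ with $u_{i}=ep_{i}s\in eRs$ and $v_{i}=sq_{i}e\in sRe$, and set $\sigma_{i}=u_{i}-u_{i}^{*}\in K_{-1}$, $\tau_{i}=v_{i}-v_{i}^{*}\in K_{1}$. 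The pivotal identity is $[\sigma_{i},\tau_{i}]=\{[u_{i},v_{i}]\}$ (where $\{x\}:=x-x^{*}$), obtained by expanding the bracket and observing that the cross terms $u_{i}v_{i}^{*}$, $u_{i}^{*}v_{i}$, $v_{i}u_{i}^{*}$, $v_{i}^{*}u_{i}$ all vanish because each picks up a forbidden product such as $se^{*}$ or $es$. Summation then gives $\{a\}-\{b\}\in L$ with $b=\sum v_{i}u_{i}\in sRs$, so $\{a\}\in L+K_{0}^{(2)}$; the case $j=2$ is handled symmetrically using $R=ReR$ to decompose elements of $sRs$.

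To finish, I would note that $\mathrm{ad}(K_{0})$ preserves $L$, since $[K_{0},K_{\pm 1}]\subseteq K_{\pm 1}\subseteq L$ and this propagates to all of $L$ by the derivation rule. Given $a_{0},a_{0}'\in K_{0}^{(1)}$, write $a_{0}=\ell+k$ with $\ell\in L$ and $k\in K_{0}^{(2)}$; then $[a_{0},a_{0}']=[\ell,a_{0}']+[k,a_{0}']=[\ell,a_{0}']\in[L,K_{0}]\subseteq L$, using $[K_{0}^{(1)},K_{0}^{(2)}]=0$. The symmetric argument handles $[K_{0}^{(2)},K_{0}^{(2)}]$, yielding $[K_{0},K_{0}]\subseteq L$. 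The step I expect to be the main obstacle is the matching identity $[\sigma_{i},\tau_{i}]=\{[u_{i},v_{i}]\}$ together with the resulting inclusion $K_{0}^{(1)}\subseteq L+K_{0}^{(2)}$: choosing the right Peirce factorization (with factors in $eRs$ and $sRe$, not mixed with pieces of $K_{\pm 1}$ coming from $sRe^{*}$ or $e^{*}Rs$) is essential so that the four cross terms disappear, and it is the fact that the leftover $\{b\}$ lands in $K_{0}^{(2)}$, which commutes with $K_{0}^{(1)}$, that makes the final bookkeeping work.
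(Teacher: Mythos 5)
Your proposal is correct. For part (1) you are essentially doing what the paper does: the paper also brackets with $e-e^{*}$ to capture $K_{\pm 1}$ (and then gets $K_{\pm 2}$ from Lemma 4 as $[K_{\pm 1},K_{\pm 1}]$), while you note once and for all that $\mathrm{ad}(e-e^{*})$ acts on $R_{i}$ as multiplication by $-i$ and divide by $i$; this is a harmless variant (and your Peirce computation does check out). Part (2) is where you genuinely diverge. The paper works at degree $\pm 2$: it writes $eRe=R_{-2}R_{2}$, splits factors into $K_{\pm 2}$ and $H_{\pm 2}$, shows $[\{eRe\},K_{0}]\subseteq[K_{-2},K_{2}]+[H_{-2},H_{2}]$, converts $[H_{-2},H_{2}]$ into $[K_{-1},K_{1}]$ via Lemma 4's identity $H_{\pm 2}=\mathrm{span}_{F}\{k^{2}\mid k\in K_{\pm 1}\}$, and treats $sRs$ through the explicit identity $saebs=(sae-e^{*}a^{*}s)(ebs-sb^{*}e^{*})-e^{*}a^{*}sb^{*}e^{*}$. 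You instead factor both diagonal blocks through degree $\pm 1$ using $RsR=R$ and $ReR=R$, prove the cross-term identity $[\sigma_{i},\tau_{i}]=\{[u_{i},v_{i}]\}$ (the four mixed products do vanish by the Peirce relations, as you say), deduce $\{eRe\}\subseteq L+\{sRs\}$ and $\{sRs\}\subseteq L+\{eRe\}$, and close with $[\{eRe\},\{sRs\}]=0$ plus $\mathrm{ad}(K_{0})L\subseteq L$. What your route buys: it bypasses the skew/symmetric bookkeeping at degree $\pm 2$ entirely (no use of Lemma 4 in part (2)), and your final swap argument is more transparent than the paper's terse step $[\{eRe\},K_{0}]\subseteq[K_{-2},K_{2}]+[H_{-2},H_{2}]$, which implicitly needs the cyclic identity $[x\circ y,z]=[x,y\circ z]+[y,z\circ x]$ together with $H_{2}\circ K_{0}\subseteq K_{2}$ and $K_{0}\circ K_{-2}\subseteq H_{-2}$. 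What the paper's route buys: it keeps the $K_{\pm 2}/H_{\pm 2}$ formalism front and center, which is exactly the machinery reused immediately afterwards in the proof of Theorem 2, whereas your argument is self-contained but feeds less directly into that later proof.
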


\begin{proof}
\begin{itemize}
\item[(1)] For arbitrary elements $k_{1}=\{sae\}, k_{-1}=\{sae^{\ast }\},$ $a\in R,$
we have $k_{1}=[k_{1}, e-e^{\ast }],$ $k_{-1}=[e-e^{\ast },k_{-1}]$.
Hence $K_{-1},K_{1}\subseteq [K,K]$.
By Lemma 4, $K_{2}=[K_{1},K_{1}],$ $K_{-2}=[K_{-1},K_{-1}]\subseteq[K, K].$
\item[(2)] We have $eRe=eRe^{\ast}Re=R_{-2}R_{2}=K_{-2}K_{2}+K_{-2}H_{2}+H_{-2}K_{2}+H_{-2}H_{2}$.
Hence $\{eRe\}\subseteq [K_{-2},K_{2}]+[H_{-2},H_{2}]+K_{-2}\circ H_{2}+H_{-2}\circ K_{2}$.
This implies that $[\{eRe\},K_{0}]\subseteq [K_{-2},K_{2}]+[H_{-2},H_{2}]\subseteq[K_{-2},K_{2}]+[K_{-1},K_{1}]$ by Lemma 4.
Now $sRs$ is spanned by elements of the type $saebs;$ $a$, $b\in R$. We have
$saebs=(sae-e^{\ast }a^{\ast }s)(ebs-sb^{\ast }e^{\ast })-e^{\ast }a^{\ast}sb^{\ast }e^{\ast }\in K_{1}K_{-1}+e^{\ast }Re^{\ast }$.
Hence $\{sRs\}\subseteq [K_{-1},K_{1}]+\{eRe\}$. Now $[\{sRs\},K_{0}]\subseteq [K_{-1},K_{1}]+[K_{-2},K_{2}]$ by what we
proved above. Lemma is proved.
\end{itemize}
\end{proof}

%\bigskip

\begin{lemma}
Let $n\geq 2$, $a_{2}^{(1)},\ldots ,a_{2}^{(n+1)}\in K_{2}\cup H_{2};$ $b_{-2}^{(1)}, \ldots ,b_{-2}^{(n)}\in K_{-2}\cup H_{-2}$
and $Card(\{b_{-2}^{(j)}, j=1,\ldots,n\})<n.$ Then
$$a_{2}^{(1)}b_{-2}^{(1)}\cdots a_{2}^{(n)}b_{-2}^{(n)}a_{2}^{(n+1)}\in\sum
a_{2}^{(i_{1})}b_{-2}^{(j_{1})}a_{2}^{(i_{2})}\cdots
b_{-2}^{(j_{r})}a_{2}^{(i_{r+1})}(K_{-2}K_{2}+H_{-2}H_{2}),\, r<n.$$
\end{lemma}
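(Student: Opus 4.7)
The strategy rests on the grading identity $R_{-2}^{2}=0$, which follows immediately from $e^{*}e=0$ (so that $R_{-2}=eRe^{*}$ satisfies $eRe^{*}\cdot eRe^{*}=eR(e^{*}e)Re^{*}=0$). In particular $b^{2}=0$ for every $b\in R_{-2}$, which is what makes the middle ``$bMb$'' piece behave like a quadratic Jordan operator.

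Since $\operatorname{Card}\{b_{-2}^{(j)}\}_{j=1}^{n}<n$, pigeonhole yields indices $p<q$ with $b_{-2}^{(p)}=b_{-2}^{(q)}=:b$. I would split the product as $P=X\cdot b\cdot M\cdot b\cdot Y$, where
\[
X=a_{2}^{(1)}b_{-2}^{(1)}\cdots a_{2}^{(p)},\quad M=a_{2}^{(p+1)}b_{-2}^{(p+1)}\cdots a_{2}^{(q)},\quad Y=a_{2}^{(q+1)}\cdots a_{2}^{(n+1)}
\]
all lying in $R_{2}$. The central observation is that $bMb=Q_{b}(M)\in R_{-2}$ is a quadratic Jordan image: computing $(bMb)^{*}=b^{*}M^{*}b^{*}$, the two copies of $b^{*}$ always contribute the same sign $\epsilon_{b}^{2}=1$, so $bMb$ inherits its $K/H$-type entirely from $M$. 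Decomposing $M=M_{K}+M_{H}\in K_{2}\oplus H_{2}$ therefore yields $bMb=bM_{K}b+bM_{H}b\in K_{-2}\oplus H_{-2}$, and hence
\[
P \;=\; X\cdot(bM_{K}b)\cdot Y \;+\; X\cdot(bM_{H}b)\cdot Y .
\]
Each summand is again an alternating product of the prescribed form, but with only $r=n-(q-p)<n$ factors of $b_{-2}$, the quadratic image $bM_{K/H}b$ serving as one legitimate new entry in $K_{-2}\cup H_{-2}$.

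To match the precise right-hand side with its $(K_{-2}K_{2}+H_{-2}H_{2})$-multiplier, I would then peel off the trailing two entries from each reduced product. These entries lie in $R_{-2}R_{2}=eRe=K_{-2}K_{2}+K_{-2}H_{2}+H_{-2}K_{2}+H_{-2}H_{2}$. When the last $b_{-2}$ and the last $a_{2}$ share a $K/H$-type, that pair already sits in $K_{-2}K_{2}+H_{-2}H_{2}$ and factors off cleanly, leaving a strictly shorter alternating product in front. The main obstacle is the cross-type case, $K_{-2}H_{2}$ or $H_{-2}K_{2}$, where the tail belongs to $eRe$ but not to $K_{-2}K_{2}+H_{-2}H_{2}$; handling it requires symmetrizing via the involution of the associative pair $(R_{-2},R_{2})$ and invoking Lemma 4 (which writes $H_{\pm 2}=\operatorname{span}_{F}\{k^{2}:k\in K_{\pm 1}\}$ and $K_{\pm 2}=[K_{\pm 1},K_{\pm 1}]$) so as to rewrite the mismatched tail as a $K_{-2}K_{2}+H_{-2}H_{2}$-element modulo still shorter alternating products, and then iterating. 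This cross-type absorption, carried out by induction on $n$, is the technical heart of the proof.
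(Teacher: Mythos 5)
Your pigeonhole collapse is a correct identity: with $b_{-2}^{(p)}=b_{-2}^{(q)}=b$ and $M=a_{2}^{(p+1)}b_{-2}^{(p+1)}\cdots a_{2}^{(q)}\in R_{2}$, indeed $bM_{K}b\in K_{-2}$, $bM_{H}b\in H_{-2}$ and $P=X(bM_{K}b)Y+X(bM_{H}b)Y$ is a shorter alternating word in each summand (the opening remark $R_{-2}^{2}=0$ is true but never used). The gap is that everything you call ``the technical heart'' --- forcing the last two factors into $K_{-2}K_{2}+H_{-2}H_{2}$ --- is in fact the entire content of the lemma, and the plan you sketch for it does not work. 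First, the involution does not preserve the Peirce component containing the tail: $R_{-2}R_{2}\subseteq eRe$ and $(eRe)^{\ast}=e^{\ast}Re^{\ast}$, so a mismatched tail $b'a'\in K_{-2}H_{2}$ (or $H_{-2}K_{2}$) cannot be ``symmetrized'' inside $eRe$; the only usable $\{x\}=x-x^{\ast}$ or $x+x^{\ast}$ manipulations are on odd alternating segments beginning and ending in the same component $R_{\pm2}$, which is what the paper exploits. Lemma 4 ($K_{\pm2}=[K_{\pm1},K_{\pm1}]$, $H_{\pm2}=\mathrm{span}\{k^{2}\}$) gives no handle on such a tail either.

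Second, and more decisively, by collapsing first you spend the hypothesis $\mathrm{Card}\{b_{-2}^{(j)}\}<n$ before reaching the hard configuration. The summand $X(bM_{K}b)Y$ is harmless: a skew $R_{-2}$-entry lets one split off a tail in $K_{-2}K_{2}$ (or $H_{-2}H_{2}$) exactly as in the paper. But in $X(bM_{H}b)Y$ all $b$-entries may now be symmetric and pairwise distinct (e.g. when $b^{(p)}=b^{(q)}$ was the only coincidence, $bM_{H}b$ being a new element), with the relevant $a$-entries skew and the final tail mismatched; this is precisely the configuration that cannot be reduced without a repetition, and you have nothing left to ``iterate'' on. The paper orders the argument the other way: a case analysis on types is done first, reducing (modulo terms already of the desired form) to the situation where all $b_{-2}^{(i)}$ are symmetric and $a_{2}^{(2)},\ldots,a_{2}^{(n+1)}$ are skew; only then is the repetition used, after transposing adjacent $b$'s modulo reduced terms until the two equal $b$'s flank a single skew $a$, so that $bab$ is itself skew and no symmetric component like your $bM_{H}b$ ever appears. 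So your pigeonhole step mirrors the paper's final move, but performed at the outset on an arbitrary segment $M$ it creates the $H_{-2}$-entry that your outline cannot absorb; as written the proof is incomplete.
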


\begin{proof}

Suppose that $a_{2}^{(n+1)}\in K_{2}$. If $b_{-2}^{(n)}\in K_{-2}$, then we
are done. Let $b_{-2}^{(n)}\in H_{-2}$. Suppose that there exists $1\leq
i\leq n-1$, such that $b_{-2}^{(i)}\in K_{-2}$. Then
$$ a_{2}^{(1)}\cdots b_{-2}^{(i)}\cdots a_{2}^{(n)}b_{-2}^{(n)}a_{2}^{(n+1)}=a_{2}^{(1)}\cdots \{b_{-2}^{(i)}\cdots b_{-2}^{(n)}\}a_{2}^{(n+1)}\pm a_{2}^{(1)}\cdots b_{-2}^{(n)}\cdots b_{-2}^{(i)}a_{2}^{(n+1)}$$ as claimed. Therefore we can assume that
$b_{-2}^{(i)}\in H_{-2}$, $1\leq i\leq n$. Let $a_{2}^{(n)}\in H_{2}$. Then
$$b_{-2}^{(n-1)}a_{2}^{(n)}b_{-2}^{(n)}a_{2}^{(n+1)}=b_{-2}^{(n-1)}(a_{2}^{(n)}b_{-2}^{(n)}a_{2}^{(n+1)}-a_{2}^{(n+1)}b_{-2}^{(n)}a_{2}^{(n)})
+b_{-2}^{(n-1)}a_{2}^{(n+1)}b_{-2}^{(n)}a_{2}^{(n)}$$
which is an element of $H_{-2}H_{2}+H_{-2}K_{2}H_{-2}H_{2}.$ We will assume therefore that $a_{2}^{(n)}\in K_{2}$.
Suppose that there exists $2\leq i\leq n-1$, such that $a_{2}^{(i)}\in H_{2}.$ Then
$a_{2}^{(i)}\cdots a_{2}^{(n)}b_{-2}^{(n)}a_{2}^{(n+1)}=(a_{2}^{(i)}b_{-2}^{(i)}\cdots a_{2}^{(n)}+(a_{2}^{(i)}b_{-2}^{(i)}\cdots a_{2}^{(n)})^{\ast})b_{-2}^{(n)}a_{2}^{(n+1)}$ \newline $\pm a_{2}^{(n)}\cdots a_{2}^{(i)}b_{-2}^{(n)}a_{2}^{(n+1)}$.
Both summands on the right hand side fall into the case that has just been
considered above.
From now on we will assume that $a_{2}^{(2)}$, \ldots , $a_{2}^{(n+1)}\in
K_{2};$ $b_{-2}^{(1)}$, \ldots , $b_{-2}^{(n)}\in H_{-2}$.
Notice that $b_{-2}^{(i-1)}a_{2}^{(i)}b_{-2}^{(i)}=%
\{b_{-2}^{(i-1)}a_{2}^{(i)}b_{-2}^{(i)}%
\}-b_{-2}^{(i)}a_{2}^{(i)}b_{-2}^{(i-1)}$.
The element $b_{-2}=\{b_{-2}^{(i-1)}a_{2}^{(i)}b_{-2}^{(i)}\}$ lies in $%
K_{-2}$.
Hence the product $a_{2}^{(1)}\cdots a_{2}^{(i-1)}b_{-2}a_{2}^{(i+1)}\cdots
a_{2}^{(n+1)}$ is one of those considered before. We proved that the
elements $b_{-2}^{(i)}$, $1\leq i\leq n$, in $a_{2}^{(1)}b_{-2}^{(1)}\cdots
a_{2}^{(n+1)}$ are skew-symmetric modulo expressions of the desired type.
Now taking into account that \newline$Card(b_{-2}^{(i)}$, $1\leq i\leq n)<n$, we
arrive at the conclusion of the lemma. We started with the assumption that $%
a_{2}^{(n+1)}\in K_{2}$. The case of $a_{2}^{(n+1)}\in H_{2}$ is treated
similarly. This finishes the proof of the Lemma.
\end{proof}

\subsubsection*{Proof of Theorem 2.}

\begin{proof}

By Lemma 2 the associative pair $(R_{-2}$, $R_{2})$ is
finitely generated. Without loss of generality we will assume that $(R_{-2}, R_{2})$ is generated by elements $a_{2}^{(i)}$, $b_{-2}^{(j)}\in K\cup H,$ $1\leq i,j\leq n$.
Consider the set of products
$P=P_{-2}\cup P_{2},$ \newline $P_{2}=\{a_{2}^{(i_{1})}b_{-2}^{(j_{1})}\cdots a_{2}^{(i_{r})}|1\leq r\leq n+2\}$,
$P_{-2}=\{b_{-2}^{(j_{1})}a_{2}^{(i_{1})}\cdots b_{-2}^{(j_{r})}|1\leq r\leq n+2\}$. By Lemma 4 for an arbitrary product
$p\in P_{\pm 2}$ we have $p+p^{\ast }=\sum \alpha _{p,i}k_{p,i}^{2}$, where $\alpha _{p,i}\in F$, $k_{p,i}\in K_{\pm 1}$.
By Lemma 5 there exist finite sets $M_{-1}\subset K_{-1}$, $M_{1}\subset
K_{1}$ such that $R_{1}=M_{-1}R_{2}+R_{2}M_{-1}$, $%
R_{-1}=M_{1}R_{-2}+R_{-2}M_{1}$.

We claim that the Lie algebra $[K, K]$ is generated by the union of the sets: $M_{1},M_{-1}$, $\{[\{p\},\{q\}] | p, q\in P\},$
$\{[(p+p^{\ast })\circ k_{q,i},$ $k_{q,i}] | p,q\in P\},$ $\{M_{-1}P_{2}\},\{M_{1}P_{-2}\}$. By Lemma 4 and Lemma 6 it is
sufficient to prove that all elements from $K_{-1}$, $K_{1}$ can be
expressed by these elements.
By Lemma 5 $K_{1}$ is spanned by elements of
the type $\{k_{-1}a_{2}^{(i_{1})}b_{-2}^{(j_{1})}\cdots a_{2}^{(i_{r})}\}$, $%
k_{-1}\in M_{-1}$. We will use induction on $r$.

If $r\leq n+2$ then the assumption is clear. If $r>n+2$ then by Lemma 7 applied to
$a^{(i_{r-n+2})}b^{(j_{r-n+2})}\ldots a^{(i_{r-1})}b^{(j_{r-1})}a^{(i_r)}$ the element
$k_{-1}a_{2}^{(i_{1})}b_{-2}^{(j_{1})}\cdots a_{2}^{(i_{r})}$ is a linear
combination of elements of the type\newline  $k_{-1}a_{2}^{(\mu _{1})}b_{-2}^{(\nu
_{1})}\cdots a_{2}^{(\mu _{t})}k_{-2}^{\prime }k_{2}^{\prime }$ and $%
k_{-1}a_{2}^{(\mu _{1})}b_{-2}^{(\nu _{1})}\cdots a_{2}^{(\mu
_{t})}h_{-2}^{\prime }h_{2}^{\prime }$, where $t<r;$ \newline $k_{-2}^{\prime }\in
\{P_{-2}\}$, $k_{2}^{\prime }\in \{P_{2}\};$ $h_{-2}^{\prime }$, $%
h_{2}^{\prime }\in \{p+p^{\ast }$ $|$ $p\in P\}$. We have
\begin{alignat*}{5}
\{k_{-1}a_{2}^{(\mu _{1})}b_{-2}^{(\nu _{1})}\cdots a^{(\mu
_{t})}k_{-2}^{\prime }k_{2}^{\prime }\}&=[\{k_{-1}a_{2}^{(\mu _{1})}\cdots
a_{2}^{(\mu _{t})}\}, [k_{-2}^{\prime }, k_{2}^{\prime }]],\\
\{k_{-1}a_{2}^{(\mu _{1})}b_{-2}^{(\nu _{1})}\cdots a_{2}^{(\mu
_{t})}h_{-2}^{\prime }h_{2}^{\prime }\}
&=[\{k_{-1}a_{2}^{(\mu _{1})}\cdots
a_{2}^{(\mu _{t})}\}, [h_{-2}^{\prime }, h_{2}^{\prime }]]
\end{alignat*}
Now it remains to notice that if $h_{2}^{\prime }=p+p^{\ast }$, $p\in P$,
then $h_{2}^{\prime }=\sum \alpha _{p\text{, }i}k_{p\text{, }i}^{2}$ and

$[h_{-2}^{\prime }$, $k_{p\text{, }i}^{2}]=2[h_{-2}^{\prime }\circ k_{p\text{%
, }i}$, $k_{p\text{, }i}]\in \lbrack K_{-1}$, $K_{1}]$.

This finishes the proof of the theorem.
\end{proof}

\section{Simple algebras}

Let $R$ be a simple finitely generated $F-$ algebra with an involution $*:R\rightarrow R,$ char $F\neq 2,$ e is an idemotent such that $ee^{*}=e^{*}e=0,$ $K=\{a \in R \mid a^{*}=-a\}.$ \newline If $e+e^{*}$ is not an identity of $R$ then the Lie algebra $[K,K]$ is finitely generated by theorem 2. Suppose that $e+e^{*}=1.$ As above, let $R_{-2}=e R e^{*},$ \newline$ R_{0}=e R e+e^{*} R e^{*}, R_{2}=e^{*} R e.$ Then $R=R_{-2}+R_{0}+R_{2}$ is a $\mathbb{Z}$- grading of $R.$ Denote $K_{i}=K\bigcap R_{i},i=-2,0,2.$

 If $R$ has a nonzero center $Z$ and $dim_{Z}R<\infty$ then  $Z$ is a finitely generated $F$-algebra. Since $Z$ is a field it follows that $dim_{F}Z<\infty,$ hence $dim_{F}R<\infty$ and $dim_{F}[K,K]<\infty.$ From now on we will assume that the algebra $R$ is not finitely dimensional over its center.
\begin{lemma}
The algebra $R$ is generated by $K_{-2}+K_{2}.$
\end{lemma}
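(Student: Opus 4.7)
The plan is to prove that the associative $F$-subalgebra $S\subseteq R$ generated by $K_{-2}+K_{2}$ is all of $R$, by combining Lie-ideal information from the $\mathbb{Z}$-grading $R=R_{-2}\oplus R_{0}\oplus R_{2}$ with classical Herstein-type theorems on simple rings with involution. Introduce the auxiliary object $L\subseteq S$, namely the Lie subalgebra of $R^{(-)}$ generated by $K_{-2}+K_{2}$.

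The first step is to show that $L$ is a Lie ideal of $K$. The grading gives $[K_{0},K_{\pm 2}]\subseteq K_{\pm 2}\subseteq L$, and an induction on Lie-word length, using the Jacobi identity together with the fact that $\operatorname{ad}(K_{0})$ is a derivation of the Lie bracket, extends this to $[K_{0},L]\subseteq L$; combined with the trivial $[K_{\pm 2},L]\subseteq[L,L]\subseteq L$, this yields $[K,L]\subseteq L$. The second step is to verify $L\not\subseteq Z$: any central $z\in R_{-2}=eRe^{*}$ must commute with $e$, and multiplying $ez=ze$ on the right by $e$ and using $e^{*}e=0$ forces $z=0$, so $Z\cap K_{-2}=0$. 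Provided $K_{-2}\neq 0$, it then follows that $L\supseteq K_{-2}\not\subseteq Z$. The non-vanishing of $K_{-2}$ (and similarly of $K_{2}$) is the place where the hypothesis $\dim_{Z}R=\infty$ enters: if the involution were trivial on $R_{-2}$, then a Morita-type analysis of the bimodule $eRe^{*}$ under the identification $e^{*}Re^{*}\cong(eRe)^{\mathrm{op}}$, together with the simplicity of $R$, would force $R$ to be finite-dimensional over $Z$, contrary to the standing assumption.

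With $L$ established as a noncentral Lie ideal of $K$, I would invoke Herstein's classical theorems for simple rings with involution. Since $\operatorname{char}F\neq 2$ and $\dim_{Z}R=\infty>4$, Herstein's Lie-ideal theorem yields $L\supseteq[K,K]$, and his generation theorem --- that the associative subring of $R$ generated by $[K,K]$ is all of $R$ --- then gives $R\subseteq S$, whence $S=R$.

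The main obstacle will be the structural point that $K_{-2}\neq 0$ under the stated hypotheses: all other steps reduce to standard applications of well-developed theory, but excluding the degenerate case in which the involution acts trivially on an off-diagonal Peirce component requires a careful separate argument exploiting the infinite-dimensionality of $R$ over $Z$, and it is here that the hypothesis distinguishing the present lemma from the finite-dimensional situation does essential work.
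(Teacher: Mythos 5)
Your route is genuinely different from the paper's and is viable in outline: the Lie subalgebra $L$ generated by $K_{-2}+K_{2}$ is exactly $K_{-2}+[K_{-2},K_{2}]+K_{2}$, and the grading does show it is a Lie ideal of $K$; if $L\not\subseteq Z$, Herstein's Lie-ideal theorem for skew elements of a simple ring (applicable since $\dim_{Z}R=\infty$; note the relevant threshold is $\dim_{Z}R>16$, not $>4$) gives $L\supseteq [K,K]$, and the fact that the associative subalgebra generated by $[K,K]$ is all of $R$ finishes the proof. Be aware that this last ``generation theorem'' is exactly the point where the paper does real work: it invokes Amitsur (since $R$ is not finite-dimensional over $Z$ it is not PI, hence satisfies no identity with involution, so $[[[K,K],K],[[K,K],K]]\neq(0)$) together with Herstein's theorem that a noncommutative subalgebra $A$ with $[A,K]\subseteq A$ equals $R$; if you quote generation by $[K,K]$ as a black box you should either check it covers the present situation or derive it this way.

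The genuine gap is the step you yourself flag and then leave unproved: $K_{-2}\neq 0$. Everything hinges on it, since otherwise $L$ could be central (indeed zero), and your ``Morita-type analysis'' is only a gesture, not an argument. Note that the paper's proof never needs this input: it uses Baxter's simplicity of $[K,K]/[K,K]\cap Z$ to conclude $[K,K]\subseteq K_{-2}+[K_{-2},K_{2}]+K_{2}+Z$, hence $[[K,K],K]\subseteq K_{-2}+[K_{-2},K_{2}]+K_{2}$, and then shows that $[[K,K],K]$ generates $R$; nonvanishing of $K_{\pm 2}$ comes out a posteriori. Your claim is in fact correct and can be completed along the lines you hint at: if $K_{-2}=0$ then every $a\in eRe^{*}$ is symmetric, so for $x\in eRe$ one gets $xa=(xa)^{*}=ax^{*}$, and comparing $(xy)a$ with $x(ya)$ yields $eRe^{*}\,[e^{*}Re^{*},e^{*}Re^{*}]=0$; the left annihilator $\{u\in e^{*}Re^{*}\mid eRe^{*}u=0\}$ is an ideal of the simple unital algebra $e^{*}Re^{*}$ and is proper because $Re^{*}R=R$ forces $eRe^{*}e^{*}Re^{*}=eRe^{*}\neq 0$, so $e^{*}Re^{*}$ is commutative, hence a field; then $e^{*}R$ is a minimal right ideal, so the simple unital ring $R$ equals its socle, is artinian, and $R\cong M_{n}(\text{field})$, contradicting $\dim_{Z}R=\infty$. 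Until you supply this (or an equivalent) argument, the proposal is incomplete at its crucial point.
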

\begin{proof}
Since the algebra $R$ is not finite dimensional over its center it follows that $R$ does not satisfy a polynomial identity. By the result of S. Amitsur [1] $R$ does not satisfy a polynomial  identity with involution. Hence $[[[K,K],K],[[K,K],K]]\neq (0).$ I.Herstein [4] proved that if $A$ is a subalgebra of $R$ such that $[A,K]\subseteq A,A$ is not commutative and $dim_{Z}R >16,$ then $A=R.$ Applying this result to the associative subalgebra generated by $[[K,K],K]$ we see that $[[K,K],K]$ generates $R$.

Let us show that $[[K,K],K]\subseteq K_{-2}+[K_{-2},K_{2}]+K_{2}.$ The Lie algebra $K_{-2}+[K_{-2},K_{2}]+K_{2}$ is an ideal in the Lie algebra $K=K_{-2}+K_{0}+K_{2}$ and, hence, in the Lie algebra $[K,K].$ As shown by W. E. Baxter [2] the Lie algebra $[K,K]/[K,K]\bigcap Z$ is simple. Hence $[K,K]\subseteq K_{-2}+[K_{-2},K_{2}]+K_{2}+Z.$ Now, $[[K,K],K]\subseteq [K_{-2}+[K_{-2},K_{2}]+K_{2},K]\subseteq K_{-2}+[K_{-2},K_{2}]+K_{2}.$ This finished the proof of the lemma.
\end{proof}

\begin{lemma}
Let $A$ be a semi prime $F-$ algebra with in involution $*:A\rightarrow A,$ $\text{ char } F\neq 2,K=K(A,*)=\{a \in A \mid a^{*}=-a\}.$ Suppose that $k \in K$ and $k K k=(0).$ Then $k=0.$
\end{lemma}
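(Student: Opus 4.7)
My plan is to reduce the problem to showing $kAk = 0$; once that is established, semiprimeness of $A$ applied to $k$ itself gives $k = 0$. The strategy rests on the observation that $kAk$ lies in the symmetric part $H$, which then lets one exploit the $*$-involution to play symmetric and skew elements against each other.

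First, I would verify $kAk \subseteq H$: for every $a \in A$, the element $a - a^* \in K$, so $k(a - a^*)k = 0$ by hypothesis, whence $kak = ka^*k$ and $(kak)^* = k^* a^* k^* = ka^*k = kak$. Writing $n_a := kak$, this also gives $n_a = n_{a^*}$, so each element of $N := kAk$ has the form $n_h$ with $h = (a+a^*)/2 \in H$. Next I would extract two key identities. From $(kbck)^* = kbck$ we obtain $kbck = kc^*b^*k$; substituting $b \mapsto bk$ in this relation produces
\begin{equation*}
n_b\, ck + n_c\, b^*k = 0 \qquad (b, c \in A), \tag{I}
\end{equation*}
and applying $*$ to (I) yields the companion
\begin{equation*}
kb\, n_c + kc^*\, n_b = 0 \qquad (b, c \in A). \tag{II}
\end{equation*}

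The main computation is to show $n_h A n_h = 0$ for every $h \in H$. Decomposing $c = c_H + c_K$ with $c_H \in H$ and $c_K \in K$, the skew part contributes $n_h c_K n_h = kh (k c_K k) h k = 0$ directly from $k K k = 0$. For $c \in H$ I would rewrite $n_h c n_h = (kh\, n_c)(hk)$ and apply (II) (with $c^* = c$) to replace $kh\, n_c$ by $-kc\, n_h$, giving $n_h c n_h = -kc\,(n_h hk)$. But $n_h hk = khkhk = k(hkh)k$, and $hkh \in K$ since $(hkh)^* = h(-k)h = -hkh$, so $k(hkh)k \in kKk = 0$. Thus $n_h A n_h = 0$, and semiprimeness of $A$ forces $n_h = 0$ for every $h \in H$; consequently $N = kAk = 0$, and a final application of semiprimeness to $k$ itself (using $kAk = 0$) yields $k = 0$.

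The main obstacle I expect is spotting the correct substitution $b \mapsto bk$ that converts the raw symmetry $kbck = kc^*b^*k$ into the workable identity (I); once (I) and its $*$-companion (II) are in hand, the rest is an orchestrated cancellation that collapses $n_h c n_h$ mechanically, with the crucial last ingredient being the elementary observation $hkh \in K$.
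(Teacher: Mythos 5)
Your proof is correct and is essentially the paper's argument in a slightly repackaged form: both reduce to showing $(khk)A(khk)=0$ by splitting $A=K+H$, the skew part vanishing directly from $kKk=0$ and the symmetric part via the two observations $hkh\in K$ and $hkc+ckh\in K$ (your identity (II) with $b,c\in H$ is exactly the paper's remark that $hkh_{1}+h_{1}kh$ is skew), followed by two applications of the semiprimeness criterion $xAx=0\Rightarrow x=0$. The only differences are cosmetic: you argue directly for all $h\in H$ and derive the key identity through $kAk\subseteq H$ and the substitution $b\mapsto bk$, whereas the paper fixes an $h$ with $khk\neq 0$ and reaches a contradiction.
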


\begin{proof}
Let $H=\{a \in A \mid a^{*}=a\}.$ Clearly, $A=K+H.$ If $k\neq 0$ then $k A k =k H k \neq (0).$ Choose an element $h \in H$ such that $k hk\neq 0.$ We have $(khk)K (khk)=(0).$ For an arbitrary element $h_{1} \in H$ we have  $k h k h_{1} k h k=k(h k h_{1}+h_{1}k h) k h k -k h_{1} k h k h k=0,$
since $h k h_{1}+h_{1}k h\in K$ and $h k h \in K.$ This implies $(k h k) A (k h k)=(0),$ which contradicts the semi primeness of $A.$ Lemma is proved.\\

Let $F<X>=F<x_{1},...,x_{m}>$ be the free associative algebra without $1.$ The mapping $x_{i}\rightarrow -x_{i},1\leq i \leq m,$ extends to the involution \newline$*:F<X>\rightarrow F<X>.$ For an arbitrary generator $x_{i},$ arbitrary elements $a_{1},a_{2},a_{3} \in K(F<X>,*)$ denote $f (x_{i},a_{1})=x_{i}a_{1}x_{i},f (x_{i},a_{1},a_{2})=f(x_{i}a_{1})a_{2} f(x_{i},a_{1}),$ $f(x_{i},a_{1},a_{2},a_{3})=f(x_{i},a_{1},a_{2})a_{3} f(x_{i},a_{1},a_{2}).$

Let $I$ be the ideal of the algebra $F<X>$ generated by all elements \newline $f(x_{i},a_{1},a_{2},a_{3}),1 \leq i \leq m, a_{1},a_{2},a_{3} \in K (F<X>),*).$\\

Recall that the Baer radical $B(A)$ of an associative algebra $A$ is the smallest ideal of $A$ such that the factor-algebra $A/B(A)$ is semi prime. The Baer radical is locally nilpotent: an arbitrary finite collection of elements from $B(A)$ generates a nilpotent subalgebra (see [3]).
\end{proof}

\begin{lemma}
The factor-algebra $F<X>/I$ is nilpotent.
\end{lemma}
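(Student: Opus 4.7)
The plan is to show that $\bar A := F\langle X\rangle/I$ coincides with its own Baer radical, after which local nilpotency of the Baer radical, combined with finite generation of $\bar A$, will force $\bar A$ to be nilpotent.

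First I would verify that every generator $f(x_i,a_1,a_2,a_3)$ of $I$ is skew-symmetric. Since $x_i^* = -x_i$ and each $a_j^* = -a_j$, a short induction shows that $f(x_i,a_1)$, $f(x_i,a_1,a_2)$, and $f(x_i,a_1,a_2,a_3)$ all lie in $K(F\langle X\rangle,*)$. Consequently $I = I^*$, and the involution descends to $\bar A$. Next, let $B := B(\bar A)$ be the Baer radical. Because $B$ is the intersection of all prime ideals of $\bar A$ and $P \mapsto P^*$ sends prime ideals to prime ideals, $B$ is $*$-invariant; hence $*$ descends to an involution on the semiprime algebra $\bar A/B$. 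The resulting projection $\pi : F\langle X\rangle \to \bar A/B$ is a $*$-equivariant surjection, and in characteristic $\neq 2$ such a map sends $K(F\langle X\rangle,*)$ onto $K(\bar A/B,*)$.

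The heart of the argument is then three successive applications of Lemma 9 inside the semiprime $\bar A/B$. Since $I$ kills every $f(x_i,a_1,a_2,a_3)$ with $a_j \in K(F\langle X\rangle,*)$, surjectivity of $\pi$ on skew elements gives $f(\pi(x_i),k_1,k_2,k_3) = 0$ for all $k_j \in K(\bar A/B)$. Setting $b := f(\pi(x_i),k_1,k_2) \in K$, this reads $b K b = 0$, so Lemma 9 forces $b = 0$. Iterating with $b := f(\pi(x_i),k_1) = \pi(x_i)k_1\pi(x_i) \in K$ gives $\pi(x_i)K\pi(x_i) = 0$, and a third application (using $\pi(x_i) \in K$ itself, since $x_i$ is skew) yields $\pi(x_i) = 0$ for every generator.

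Hence $\bar A/B = 0$, i.e.\ $\bar A = B(\bar A)$. Since $\bar A$ is a quotient of the finitely generated algebra $F\langle X\rangle$, its images of the $x_i$ form a finite generating set lying inside the locally nilpotent Baer radical, so they generate a nilpotent subalgebra, which must exhaust $\bar A$. Therefore $\bar A$ is nilpotent. The main obstacle is setting up the iterated application of Lemma 9 cleanly: one must check at each stage that the intermediate expressions $f(\pi(x_i),k_1,k_2)$, $f(\pi(x_i),k_1)$, and $\pi(x_i)$ are genuinely skew, and one must know that the hypothesis of Lemma 9 is available for all of $K(\bar A/B)$, which is exactly what the $*$-equivariant surjectivity of $\pi$ provides.
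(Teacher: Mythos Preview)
Your proposal is correct and follows essentially the same route as the paper: pass to $A/B(A)$, use $*$-invariance of $I$ and of the Baer radical to get a semiprime involutive quotient, then peel off the three layers of $f(x_i,\cdot,\cdot,\cdot)$ by successive applications of Lemma~9, conclude $A=B(A)$, and finish with local nilpotency plus finite generation. You are in fact more explicit than the paper on two points it leaves tacit: that each intermediate $f(\pi(x_i),k_1,\dots)$ is skew, and that surjectivity of $\pi$ on skew elements (using $\operatorname{char}F\neq 2$) is what lets the vanishing of $f(x_i,a_1,a_2,a_3)$ for $a_j\in K(F\langle X\rangle,*)$ propagate to vanishing for all $k_j\in K(\bar A/B,*)$.
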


\begin{proof}
Since $I^{*}=I,$ the involution  $*$ gives rise to an involution on $A=F<X>/I$. Let $B(A)$ be the Baer radical of $A$, $\overline{A}=A/B(A).$ The radical $B(A)$ is invariant with respect to any involution, hence $\overline{A}$ is an involutive semi prime algebra. By Lemma 9 the image of an arbitrary element $f(x_{i},a_{1},a_{2},a_{3}),a_{1},a_{2},a_{3} \in K(F<X>,*),$ is equal to zero in $\overline{A}.$ Again, subsequently applying lemma 9 three times we get $f(x_{i},a_{1},a_{2})=0$ in $\overline{A}$, $f(x_{i},a_{1})=0$ in $A$ and, finally, $x_{i}=0$ in $\overline{A}$,which means that $A=B(A).$ Since the algebra $A$ is finitely generated we conclude that $A$ is nilpotent. Lemma is proved.
\end{proof}

The degree of nilpotency of the algebra $A$ depends on the number of generators $m.$ Let $F<x_{1},...,x_{m}>^{d(m)}\subseteq I.$\\
\newline Now let us return to our finitely generated simple algebra $R.$ By Lemma 8 there exists elements $k_{1}^{+},...,k_{m}^{+} \in K_{2}, k_{1}^{-},...,k_{m}^{-} \in K_{-2}$ that generates $R.$

\begin{lemma}
The Jordan pair $(K_{-2},K_{2})$ is generated by elements $\{a\},$ where $a$ are products in $k_{i}^{\pm}, 1\leq i \leq m$ generated by products of odd length $< d(2m)$.
\end{lemma}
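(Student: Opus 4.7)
The plan is to prove by induction on the length $r$ that for every alternating product $p = g_1 g_2 \cdots g_r$ of the generators $k_i^{\pm}$ of odd length $r$, the element $\{p\} = p - p^{\ast}$ lies in the Jordan subpair $B \subseteq (K_{-2}, K_2)$ generated by the set $S = \{\{q\} : q \text{ an alternating product of the } k_i^{\pm} \text{ of odd length} < d(2m)\}$. Since $R$ is generated by the $k_i^{\pm}$ (Lemma 8), $R_{\pm 2} \cdot R_{\pm 2} = 0$, and every element of $K_{\pm 2}$ has the form $\{p\}$ for some alternating odd-length product $p$, proving this inductive claim suffices.

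The base case $r < d(2m)$ is immediate. For $r \geq d(2m)$, I invoke Lemma 10 applied to the free associative algebra $F\langle X\rangle$ on $2m$ generators mapped surjectively onto $R$. Since $F\langle X\rangle^{d(2m)} \subseteq I$, the monomial $p$ lies in $I$ and therefore admits a decomposition $p = \sum_j \lambda_j u_j f(g_j, c_{j,1}, c_{j,2}, c_{j,3}) v_j$ in $R$, with $g_j \in \{k_i^{\pm}\}$ and $c_{j,\ell} \in K$. By homogeneity of the decomposition in total free-algebra degree, each of $u_j, v_j, c_{j,1}, c_{j,2}, c_{j,3}$ has length strictly less than $r - 7$.

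The key observation is that $f(g, c_1, c_2, c_3) = U_{U_{U_g(c_1)}(c_2)}(c_3)$ is an iterated Jordan pair $U$-operator, and for $g \in K_{\pm 2}$ the operator $U_g(x) = \frac{1}{2}\{g, x, g\}$ annihilates the $K_0$-component of its argument and sends $K_{\mp 2}$ into $K_{\pm 2}$. Therefore the image of each $f_j$ in $R$ depends only on the $K_{\mp 2}$-projections of the $c_{j,\ell}$'s, which are themselves sums of $\{q\}$'s with $q$ alternating of odd length $< r$. By the inductive hypothesis these projections lie in $B$, hence so does each $f_j$.

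The final step is to express $\{u_j f_j v_j\}$ as a Jordan pair expression in elements of $B$. I would peel off the outermost factors of $u_j$ and $v_j$ one at a time, alternating between the two ends and using the identity $\{a, b, c\} = abc + cba$ together with the $\mathbb{Z}$-grading $R = R_{-2} + R_0 + R_2$ to rewrite associative concatenations as Jordan triple products on strictly shorter arguments. The main obstacle lies here: ensuring that the $K_0$-contributions arising when splitting $u_j$ and $v_j$ into graded pieces remain in $B$. This should follow from the fact that $K_0$ acts on $K_{\pm 2}$ through inner derivations realized by Jordan triple products $\{g, \cdot, h\}$ whose outer arguments $g, h$ are shorter alternating products already covered by the inductive hypothesis.
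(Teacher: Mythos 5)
Your skeleton coincides with the paper's: induct on the odd length $r$, map the free algebra on $2m$ generators onto $R$, use Lemma 10 to write a long monomial as $\sum_j \lambda_j u_j f(g_j,c_{j,1},c_{j,2},c_{j,3}) v_j$, and note that $f(g,c_1,c_2,c_3)=U_{U_{U_g(c_1)}(c_2)}(c_3)$ only sees the $K_{\mp 2}$-projections of the $c$'s (your grading observations here are correct and essentially implicit in the paper). But the decisive step is exactly the one you defer: expressing $\{u_j f_j v_j\}$ as a Jordan pair expression in braces of shorter products. The paper does this by explicit computation (items (1)--(3) of its proof), and the computation uses in an essential way the \emph{palindromic sandwich form} of $f_3=k_i^{\sigma}c_2'k_i^{\sigma}c_3k_i^{\sigma}c_2'k_i^{\sigma}$, $c_2'=c_1k_i^{\sigma}c_2k_i^{\sigma}c_1$: the repeated outer letter $k_i^{\sigma}$ is what allows the left factor $u$ and the right factor $v$ to be absorbed simultaneously, as in $\{u f_3 v\}=\{\{u k_i^{\sigma}c_2'\},\,k_i^{\sigma}c_3k_i^{\sigma},\,\{c_2'k_i^{\sigma}v\}\}\pm(\text{correction})$, with the correction term again of the form already handled in step (1). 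This is the whole reason the sandwich $f$ is iterated three times and the reason Lemmas 9--10 and the bound $d(2m)$ enter at all.

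Your substitute --- treat $f_j$ merely as an element of the subpair $B$ and then ``peel off the outermost factors of $u_j$ and $v_j$ one at a time'' --- does not work as stated. One-sided associative multiplication by a word is not a Jordan pair operation, and each symmetrization $\{xw\}=xw\pm \mathrm{rev}(w)x$ you introduce produces a reversed cross term; controlling those cross terms is precisely what requires the explicit sandwich structure of $f_3$, which your argument discards the moment you only remember $f_j\in B$. The appeal to ``$K_0$ acting by inner derivations $\{g,\cdot,h\}$'' does not address this, because the obstruction is the one-sided (non-derivation) multiplications by $u_j$ and $v_j$, not an action of $K_0$. A sanity check: if your peeling step were valid for an arbitrary $f_j\in B\cap K_{\pm2}$, you could take $f_j$ to be a single generator $k_i^{\sigma}$, conclude that every $\{w\}$ of length $\geq 3$ reduces, and hence that the pair $(K_{-2},K_2)$ is generated by the $k_i^{\pm}$ themselves --- making Lemmas 9, 10 and the constant $d(2m)$ superfluous. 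Nothing in your sketch rules this out, which shows the critical estimate has not actually been carried out; as written, the proposal has a genuine gap at the step where the paper does its real work.
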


\begin{proof}
Choose a generator $k_{i}^{\sigma},\sigma =+\, or \, -,$ and three elements $c_{1},c_{2},c_{3} \in K_{-\sigma 2}.$ Denote $f_{1}=k_{i}^{\sigma}c_{1}k_{i}^{\sigma},f_{2}=f_{1}c_{2}f_{1},f_{3}=f_{2}c_{3}f_{2}.$ \newline Choose arbitrary elements $a_{1},...,a_{n} \in K_{-\sigma 2}; b_{1},...,b_{n} \in K_{\sigma 2}.$

\begin{enumerate}
   \item Let $u=a_{1}b_{1}..a_{n}b_{n}.$ Then $f_{1}u=k_{i}^{\sigma} c_{1}k_{i}^{\sigma}u=k_{i}^{\sigma}c_{1}\{k_{i}^{\sigma}u\}\pm k_{i}^{\sigma}(u^{*}c_{1})k_{i}^{\sigma}.$ Hence $\{f_{1}u\}=\{k_{i}^{\sigma},c_{1},\{k_{i}^{\sigma}u\}\}\pm \{ k_{i}^{\sigma},\{c_{1}u\},k_{i}^{\sigma}\}$ is a nontrivial Jordan expression.

 \item Let $ u=b_{1}a_{1}...b_{t}a_{t},v=a_{t+1}b_{t+1}...a_{n}b_{n}.$ Then $\{ u f_{2}v\}=\{\{u f_{1}\},c_{2},\{f_{1}u\}\}\pm \{f_{1}...\}$ is a nontrivial Jordan expression by (1).

     \item Let $u=a_{1}b_{1}..b_{t-1}a_{t},v=a_{t+1}b_{t+1}...a_{n-1}b_{n-1}a_{n}$, it remains to consider the product $u f_{3}v.$ We have $f_{3}=f_{2}c_{3}f_{2}=f_{1}c_{2}f_{1}c_{3}f_{1}c_{2}f_{1}=k_{i}^{\sigma}c_{1} k_{i}^{\sigma}c_{2} k_{i}^{\sigma}c_{1}k_{i}^{\sigma}c_{3}k_{i}^{\sigma}c_{1}k_{i}^{\sigma}c_{2}k_{i}^{\sigma}c_{1}k_{i}^{\sigma}=k_{i}^{\sigma}c'_{2} k_{i}^{\sigma}c_{3} k_{i}^{\sigma}c'_{2} k_{i}^{\sigma},$ where $c'_{2}=c_{1}k_{i}^{\sigma}c_{2}k_{i}^{\sigma}c_{1}.$\\

         Hence, $\{uk_{i}^{\sigma}c'_{2} k_{i}^{\sigma}c_{3} k_{i}^{\sigma}c'_{2} k_{i}^{\sigma}v\}=\{\{u k_{i}^{\sigma}c'_{2}\},k_{i}^{\sigma}c_{3} k_{i}^{\sigma},\{c'_{2} k_{i}^{\sigma}v\}\}\pm \{c'_{2}...\} $

 \end{enumerate}

The second summand can be treated in the same way as we did in (1).

Now we are ready to finish the proof of the lemma

Let $a=k_{i1}^{\sigma},k_{j1}^{-\sigma}...k_{is}^{\sigma},\, 2s-1\geq d(2m).$ Then by lemma 11 ${a}$ is a Jordan expression in elements ${b},$ where $b$ are products in $k_{i}^{\pm},1 \leq i \leq m,$ of odd length less than $2s-1.$ This proves the lemma.
\end{proof}
As we have already mentioned above $[K,K]+Z/Z=K_{-2}+[K_{-2},K_{2}]+K_{2}+Z/Z.$
In view of Lemma11 this implies that the algebra $[K,K]/[K,K] \bigcap Z$ is finitely generated. Theorem 3 is proved.

\section*{Acknowledgement}
This paper was funded by King Abdulaziz University, under grant No. (12-130-1434 HiCi). The authors, therefore, acknowledge technical and financial support of KAU.

\end{document}